\newtheorem{thm}{Theorem}[section]
\newtheorem{lemma}[thm]{Lemma}
\newtheorem{prop}[thm]{Proposition}
\numberwithin{equation}{section}
\theoremstyle{definition}
\newtheorem{rem}[thm]{Remark}
\newtheorem{example}[thm]{Example}
\newtheorem{definition}[thm]{Definition}
\newcommand{\bN}{{\mathbb{N}}}
\newcommand{\bR}{{\mathbb{R}}}
  \newcommand{\U}{{\mathcal{U}}}
  \newcommand{\X}{{\mathcal{X}}}
  \newcommand{\Y}{{\mathcal{Y}}}
\begin{document}



\title[Equilibrium stresses and rigidity for infinite tensegrities and frameworks]{Equilibrium stresses and rigidity for infinite tensegrities and frameworks}
\author[S. C. Power]{S. C. Power}

\address{Dept.\ Math.\ Stats.\\ Lancaster University\\
Lancaster LA1 4YF \\U.K. }

\email{s.power@lancaster.ac.uk}

\begin{abstract}  Asymptotic equilibrium stresses are defined for countably infinite tensegrities and generalisations of the Roth-Whiteley characterisation of first-order rigidity are obtained. Generalisations of prestress stability and second order rigidity are given for countably infinite bar-joint frameworks and are shown to give sufficient conditions for continuous rigidity relative to certain prescribed motions. The proofs are based on a new short proof for finite frameworks that prestress stability ensures continuous rigidity.
\end{abstract}

\thanks{
{MSC2020 {\it  Mathematics Subject Classification.}
52C25 \\
}}

\maketitle


A finite tensegrity in $\bR^d$ can be conceived of as a structure consisting of inextensible cables, incompressible struts and rigid bars, all of which are connected at their endpoints in the manner of a linearly embedded graph $G$. It is denoted as $G(p)$, where $p=\{p_1,\dots ,p_n\}$ is the embedding (or placement) of the vertices of $G$ and where it is understood that certain edges of $G$ correspond to cables, struts and bars, the so-called {members} of $G(p)$.
Roth and Whiteley \cite{rot-whi} obtained a useful characterisation of
their first-order rigidity, also known as infinitesimal rigidity (IR). This is given in terms of the existence of a proper equilibrium stress together with an evident necessary condition, namely that the structure with all members replaced by bars should be first-order rigid. See Theorem \ref{t:rothwhiteley}. We consider here countably infinite tensegrities and 
obtain generalisations of the Roth-Whiteley theorem with respect to first-order rigidity relative to asymptotically decaying motions, such as first-order $c_0$-rigidity, for velocity fields that tend to zero at infinity, and  first-order $\ell^2$-rigidity, for finite energy velocity fields. In fact we consider  first-order \emph{$\X$-rigidity}, the first-order rigidity with regard to velocity fields of $\X\otimes \bR^d$, where $\X$ is one of the classical Banach sequence spaces $\ell^q, 1\leq q<\infty$, or $c_0$.

 We also consider equilibrium stresses from the point of view of generalising the notion of prestress stability to countably infinite tensegrities and bar-joint frameworks.
Recall that the {continuous rigidity} (CR) of a finite bar-joint framework 
requires that there are no nontrivial continuous motions $t\to p(t)$, for $ t\in[0,1]$, which preserve the length constraints on members. It is well-known  that continuous rigidity is ensured by prestress stability (PS) and also by second order rigidity (2OR).
The main references here are Connelly   
 \cite{con-1980}, \cite{con-energy} and Connelly and Whiteley \cite{con-whi}.
See also the more recent articles of Connelly and Gortler \cite{con-gor} and Holmes-Cerfon, Gortler and Theran \cite{hol-gor-the}.
We give new short proofs of these implications which use only Connelly's stress energy function.  The arguments also adapt readily to the countably infinite setting and we obtain analogous implications for
our definitions of bounded prestress stability (BPS) and a restricted form of continuous rigidity, namely \emph{directed continuous rigidity} (DCR). See Definition \ref{d:boundedlysmooth} and Remark \ref{r:CRandDCR}. This is of interest since, as shown in Section \ref{ss:examples},
infinitesimal rigidity does not imply continuous rigidity for infinite frameworks.



 In Section \ref{s:section1} we define tensegrities and their infinitesimal (first-order) flexes and give a proof of the Roth-Whiteley theorem. 
In Section 2 we consider countably infinite tensegrity frameworks noting first that the existence of a proper equilibrium stress need not certify first-order rigidity.
On the other hand we show that the existence of a certain \emph{proper asymptotic equilibrium stress} (Definition \ref{d:aestress})  does imply first-order $\X$-rigidity. In Section 3, which is essentially independent of the previous sections, we consider prestress stability for infinite bar-joint frameworks.

The deeper direction of the Roth-Whiteley characterisation is that  first-order rigidity ensures the presence of a proper equilibrium stress. To generalise this, to asymptotic equlibrium stresses and first-order $\X$-rigidity, we consider closed convex cones $C$ in Banach sequence spaces and make use of Hahn-Banach separation functionals and a relative second dual cone equality lemma, namely Lemma \ref{l:doubledualconeSUB}.

Much of the basic theory of first-order rigidity for finite bar-joint frameworks extends in some manner to countably infinite bar-joint frameworks, particularly in the case of generic frameworks. See Kitson and Power  \cite{kit-pow-I}, \cite{kit-pow-II} for example. On the other hand countably infinite tensegrities have received little attention in this regard. 
 E. B. Ashton \cite{ash} has considered ``continuous tensegrities" and certain extensions of the Roth-Whiteley theorem but the main applications involve tensegrities with a continuum of struts, in a continuous path, with ends connected by curved cables. As we have remarked, a literal generalisation of the Roth-Whiteley characterisation to countably infinite tendesgrities does not hold for general first order rigidity. The adjustments we make are natural from the perspective of functional analysis and in particular we restrict attention to countable tensegrities $G(p)$ which have \emph{uniform structure} in the sense that there is an upper bound both to the lengths of members and to the degrees of the vertices of $G$. This ensures that the rigidity matrix determines a bounded linear transformation from $\X\otimes \bR^d$ to $\X\otimes \bR$ and that its transpose matrix is a bounded linear transformation in the reverse direction.  For other considerations of the rigidity matrix as a bounded linear transformation see also Owen and Power \cite{owe-pow-crystal} and Kastis, Kitson and Power \cite {kas-kit-pow}.

The proofs below 
are self-contained apart from standard separation results for closed cones in Hilbert space and Banach spaces. 
The recent book of Connelly and Guest \cite{con-gue} gives a useful reference  to the rigidity theory of finite bar-joint frameworks and tensegrities. 



\section{Finite and infinite cable-strut tensegrities}\label{s:tensegrites}\label{s:section1}

A \emph{tensegrity} $G(p)$ is a bar-joint framework $(G,p)$ in $\bR^d$ for which certain bars $p_ip_j$, for $ij$ in a subset $E_c$ (resp.  $E_s$),  have been replaced by \emph{cables} (resp. \emph{struts}). This arises from a partition $E=E_b\cup E_c\cup E_s$ of the edges of the underlying simple graph $G=(V,E)$. The infinitesimal constraint equations, associated with a finite tensegrity $G(p)$ and the indexing $V=\{v_1,\dots ,v_n\}$, are the conditions for a velocity field $u=\{u_1,\dots ,u_n\} \subset \bR^d$ and placement $p=(p_1,\dots ,p_n)$ which are given by
\begin{eqnarray*}
(p_i-p_j)\cdot(u_i-u_j)&=0, \quad ij \in E_b,\\
(p_i-p_j)\cdot(u_i-u_j)&\leq 0, \quad ij \in E_c,\\
(p_i-p_j)\cdot(u_i-u_j)&\geq 0, \quad ij \in E_s.
\end{eqnarray*}
When these conditions hold $u$ is said to be an \emph{infinitesimal flex} of $G(p)$. If all infinitesimal flexes $G(p)$ are trivial, that is, are rigid motion flexes in the usual sense for bar-joint frameworks, then $G(p)$ is said to be \emph{first-order rigid}. These definitions also apply to countably infinite tensegrities $G(p)$.

It is convenient to write $\overline{G(p)}$ for the associated bar-joint framework $(G,p)$ and we note that
if ${G(p)}$ is first-order rigid then so too is $\overline{G(p)}$ since any infinitesimal flex of the bar-joint framework is also an infinitesimal flex for ${G(p)}$. 
An \emph{equilibrium stress} $\omega:E \to \bR$ for the bar-joint framework  
$\overline{G(p)}$ is a scalar field (or \emph{stress field}) $\omega = (\omega_e)$ 
such that for each joint $p_j$ we have the equilibrium condition
\[
\Sigma_{i:ij \in E} ~~\omega_{ij}(p_i-p_j) =0.
\]
An  \emph{equilibrium stress} $\omega:E \to \bR$ for the tensegrity $G(p)$ is an
equilibrium stress $\omega$ for $\overline{G(p)}$ with the additional property that $\omega_{ij}$ is nonpositive (resp. nonnegative) if $ij$ indexes a cable (resp. strut). Also an equilibrium stress $\omega$ of $G(p)$ is said to be \emph{proper} if $\omega_{ij}$ is nonzero for all cables and struts.

Since a bar constraint for the pair $u_i, u_j$ is equivalent to a cable constraint together with a strut constraint, for the same pair $ij$, it is convenient to consider \emph{cable-strut tensegrities} $G(p)$ determined by $p$ and subsets $E_c, E_s$ which are not necessarily disjoint subsets of  $\{1,\dots ,n\}^2$ (or of $\bN^2$ if $G$ is countable). The constraint system for this cable-strut tensegrity $G(p)$ is then the set of inequalities
\begin{eqnarray*}
(p_i-p_j)\cdot(u_i-u_j)&\leq 0, \quad ij \in E_c,\\
(p_i-p_j)\cdot(u_i-u_j)&\geq 0, \quad ij \in E_s.
\end{eqnarray*}
Note that we retain the fact that $G$ is the associated simple graph, so that a label $ij$ in both $E_c$ and $E_s$ corresponds to one edge of $G$. Also, $\overline{G(p)}$ is the associated bar-joint framework, as before. Henceforth, and without loss of generality, we consider only cable-strut tensegrities $G(p)$.

The inequality constraint system for $G(p)$ can be expressed in terms of a matrix condition for a certain   \emph{tensegrity rigidity matrix} $R(G(p))$. If $G$ is finite then the matrix has $m=|E_c|+|E_s|$ rows and $n$ edges, where  the row for a member $e$ is equal to
\[
\kbordermatrix{& & & & v_i & & & & v_j & & & \\
e & 0 & \cdots &0 & sgn(e)(p_i-p_j) &0& \cdots&0 &sgn(e)(p_j-p_i) &0& \cdots & },
\]
where $sgn(e)$ is $-1$  (resp. $+1$) if $e$ is a cable (resp. strut). It follows that a velocity field $u$ is an infinitesimal flex of $G(p)$ if and only if
the stress field $R(G(p))u$ lies in the cone $\bR^m_+$. Here and below we view
$u$ as a column matrix and $R(G(p))u$ as the usual matrix product.  
Note also that $R(G(p))$ is obtained from the usual rigidity matrix $R(G,p)$ by repeating rows corresponding to cable-strut pairs and then by multiplying the rows corresponding to cables by -1. (The matrix $R(G,p)$ has $|E|$ rows of the form above with all signs positive.) It follows that $R(G(p))u=0$ if and only if $R(G,p)u=0$. Also, $\omega$ is an equilibrium stress of $G(p)$ if and only if $|\omega | R(G(p))u=0$.

 Let $G$ be a countably infinite simple graph with infinite cable-strut tensegrity $G(p)$
with $p=(p_1, p_2, \dots )$ a vertex placement in $\bR^d$. The infinite rigidity matrix $R(G(p))$ can be viewed as a linear transformation from the space of velocity fields $u$ in the direct product vector space $(\bR^d)^\infty$ to the vector space  $\bR^E = \prod_{e\in E}\bR$. We shall assume that each vertex of $G$ has finite degree. In this case the map $\omega \to \omega R(G(p))$ is well-defined as a linear transformation.



The Roth-Whiteley theorem for finite cable-strut tensegrities takes the following form. 

\begin{thm}\label{t:rothwhiteley}
Let $G(p)$ be a finite cable-strut tensegrity. Then $G(p)$ is first-order rigid if and only if $\overline{G(p)}$ is first-order rigid and there exists a proper equilibrium stress.
\end{thm}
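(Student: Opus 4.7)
My plan is to treat the two implications separately, using the tensegrity rigidity matrix $R := R(G(p))$ together with the fact noted in the excerpt that $\ker R = \ker R(G,p)$, so that the equilibrium condition for a proper stress $\omega$ reads $|\omega|\,R = 0$ with every entry $|\omega_e|$ strictly positive.

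For the easier direction, I would assume $\overline{G(p)}$ is first-order rigid and let $\omega$ be a proper equilibrium stress. Given an infinitesimal flex $u$ of $G(p)$, so $Ru \in \bR^m_+$, I would pair it with $|\omega|$: one has $\langle |\omega|, Ru\rangle = (|\omega|\,R)u = 0$, and since each summand $|\omega_e|(Ru)_e$ is nonnegative while $|\omega|$ is coordinatewise strictly positive, every $(Ru)_e$ must vanish, giving $Ru = 0$. Equivalently $R(G,p)u = 0$, so by first-order rigidity of $\overline{G(p)}$ the flex $u$ is trivial.

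For the converse, assume $G(p)$ is first-order rigid. Then $\overline{G(p)}$ is first-order rigid, since any $u$ with $R(G,p)u = 0$ satisfies $Ru = 0 \in \bR^m_+$, hence is an infinitesimal flex of $G(p)$, hence trivial. To produce a proper equilibrium stress, I would note that first-order rigidity of $G(p)$, combined with $\ker R = \ker R(G,p)$, says that every $u$ with $Ru \in \bR^m_+$ actually satisfies $Ru = 0$; equivalently, the subspace $W := R(\bR^{nd}) \subseteq \bR^m$ meets the positive cone $\bR^m_+$ only at the origin. A standard alternative theorem (Stiemke's lemma, or equivalently a Hahn--Banach separation of $W$ from the cone of strictly positive vectors in $\bR^m$) then furnishes $\mu \in \bR^m$ with $\mu_e > 0$ for every $e$ and $R^T\mu = 0$. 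Setting $\omega_e := \sgn(e)\mu_e$ turns $\mu$ into a stress that is strictly negative on every cable and strictly positive on every strut and still satisfies the vertex equilibrium conditions; this is the required proper equilibrium stress.

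The crux is the separation argument producing a strictly positive $\mu$ annihilating $W$. In finite dimension this is routine, but the way one phrases it matters, since the author will want to reuse the same idea for countably infinite tensegrities, with $W$ replaced by a closed convex cone in a Banach sequence space and Stiemke's lemma by the functional-analytic separation result of Theorem \ref{t:conwayseparation}. The remaining steps, in particular the sign bookkeeping translating $\mu$ into $\omega$, are straightforward.
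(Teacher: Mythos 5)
Your proof is correct and follows essentially the same route as the paper: the same pairing argument $\langle|\omega|,Ru\rangle=0$ for sufficiency, and for necessity the observation that first-order rigidity forces $R(\bR^{nd})\cap\bR^m_+=\{0\}$, followed by a theorem of the alternative producing a strictly positive $\mu$ with $\mu R=0$. The only difference is that you cite Stiemke's lemma as a black box, while the paper derives it (as its ``dichotomy lemma'') from a dual-cone lemma and a Hilbert-space separation lemma --- precisely so that, as you anticipate, the argument can be redone for closed convex cones in Banach sequence spaces in the infinite case.
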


The sufficiency direction of the proof is straightforward.
For a stress field $\omega$ with nonnegative values for struts and nonpositive values for cables, let $\mu_e=|\omega_e|$.
Then $\omega$ satisfies the equilibrium equations if and only if $\mu R(G(p))=0$. 
Now let $\overline{G(p)}$ be first-order rigid, let $\omega$ be a proper equilibrium stress for $G(p)$ and let $u$ be an infinitesimal flex of $G(p)$, so that $\sigma= R(G(p))u$ lies in the cone $\bR^m_+$. Then 
\[
\mu \cdot \sigma = \mu \dot (R(G(p)u) = (\mu R(G(p))\cdot u = 0.
\]
Since $\mu_e$ is strictly positive for every member $e$ it follows that $\sigma=0$. Thus $u$ is an infinitesimal flex of $\overline{G(p)}$.  By our assumptions, $u$ is a trivial rigid motion flex and it follows that $G(p)$ is first-order rigid.


The necessity direction depends on some basic properties of convex cones in $\bR^m$ which are brought into play for the convex cone generated by the rows of the rigidity matrix $R(G(p))$.

A {cone} $C$ in  $\bR^m$ is a nonempty set closed under multiplication by nonnegative real numbers. 
The dual cone of $C$ is the cone $C^*$ in the dual space $(\bR^m)'$ consisting of the set of linear functionals $x^*$ such that $x^*(x)\geq0$ for all $x$ in $ C$. A standard fact is that if $C$ is norm-closed then the second dual cone $C^{**}$ coincides with $C$ (that is, with the canonical image of $C$ in the second dual space). This follows readily from the following separation lemma. A proof via standard  Hilbert space geometry also yields a generalisation of the lemma to closed cones in Hilbert space. See also Lemma \ref{l:coneseparation}.

\begin{lemma}\label{l:separationHilbert} {\bf Separation lemma.} Let $C$ be a closed convex cone in $\bR^m$
and $w\notin C$. Then there is a continuous linear functional $f$ with $f(w)< 0 \leq f(x)$ for all $x\in C$.
\end{lemma}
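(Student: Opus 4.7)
The plan is to exploit the Hilbert space structure directly, using the nearest-point projection onto the closed convex cone $C$. Since $C$ is nonempty, closed, and convex, for $w\notin C$ there is a unique $p\in C$ minimizing $\|w-x\|$ over $x\in C$; the existence and uniqueness of this closest point is the one nontrivial analytic input, and I would either quote it or sketch it via the parallelogram law applied to a minimizing sequence (which yields a Cauchy sequence, hence convergent, with uniqueness from strict convexity of the norm squared).

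Next I would derive the standard variational inequality characterizing $p$. For any $x\in C$ and $t\in[0,1]$, convexity gives $p+t(x-p)\in C$, so the function $t\mapsto \|w-p-t(x-p)\|^2$ attains its minimum at $t=0$. Expanding and comparing linear terms yields
\[
\langle p-w,\,x-p\rangle \ge 0 \quad \text{for all } x\in C.
\]
Define the continuous linear functional $f(x)=\langle p-w,\,x\rangle$. The inequality rewrites as $f(x)\ge f(p)$ for all $x\in C$.

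The cone structure then pins down $f(p)=0$ and the sign condition. Indeed, because $C$ is closed under multiplication by nonnegative scalars, $sp\in C$ for every $s\ge 0$, and $f(sp)=sf(p)\ge f(p)$; letting $s=0$ and $s=2$ forces $f(p)\le 0$ and $f(p)\ge 0$, so $f(p)=0$. Hence $f(x)\ge 0$ for all $x\in C$. Similarly, if some $x_0\in C$ had $f(x_0)<0$, then $f(sx_0)=sf(x_0)\to-\infty$ as $s\to\infty$, contradicting $f\ge 0$ on $C$. Finally,
\[
f(w)=\langle p-w,\,w\rangle = \langle p-w,\,w-p\rangle + f(p) = -\|p-w\|^2 < 0,
\]
since $w\ne p$. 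This gives the desired $f$.

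The only real obstacle is the existence of the closest point; everything else is routine manipulation of the variational inequality together with the cone hypothesis, and no Hahn--Banach machinery is needed. In finite dimensions existence is immediate by compactness of closed balls intersected with $C$, and in general Hilbert space it comes from the standard projection theorem onto closed convex sets, which I would cite rather than reprove.
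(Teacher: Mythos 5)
Your proof is correct and follows essentially the same route as the paper: both take the nearest point of $C$ to $w$ (your $p$, the paper's $x_*$) and use the functional $x\mapsto\langle x_*-w,\,x\rangle$, with the orthogonality $\langle x_*-w,x_*\rangle=0$ and the nonnegativity on $C$ obtained from the same convexity/cone considerations, merely organized in your case through the standard variational inequality. The only cosmetic difference is that your penultimate paragraph (ruling out $f(x_0)<0$ by scaling) is redundant, since $f(x)\ge f(p)=0$ already gives nonnegativity on all of $C$.
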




\begin{lemma} {\bf Dual cone lemma.} Let $X=\{x_1,\dots ,x_m\}$ be a set of vectors in $\bR^m$ with convex cone $C(X)=\{\sum_i \lambda_ix_i: \lambda_i\geq 0\}$. Then the following statements are equivalent.
\medskip

(i) $C(X)^*$ is a subspace.

(ii) $C(X)^*= X^\perp$.

(iii)  $\sum_i \mu_ix_i = 0$ for some choice of strictly positive coefficients $\mu_1,\dots ,\mu_m$.
\end{lemma}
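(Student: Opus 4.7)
My plan is to dispatch the implications (ii) $\Rightarrow$ (i), (i) $\Rightarrow$ (ii), and (iii) $\Rightarrow$ (ii) by direct verification, and to treat (ii) $\Rightarrow$ (iii), the only substantive step, by a short duality argument. First, (ii) $\Rightarrow$ (i) is immediate because $X^\perp$ is a subspace. For (i) $\Rightarrow$ (ii), if $C(X)^*$ is a subspace and $f \in C(X)^*$, then $-f$ also lies in $C(X)^*$, so $f(x_i)\geq 0$ and $-f(x_i)\geq 0$ together force $f(x_i)=0$ for each $i$; thus $C(X)^* \subseteq X^\perp$, and the reverse inclusion is trivial since $X\subset C(X)$.

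For (iii) $\Rightarrow$ (ii), suppose $\sum_i \mu_i x_i = 0$ with all $\mu_i > 0$. If $f \in C(X)^*$ then $f(x_i)\geq 0$ for every $i$ and $\sum_i \mu_i f(x_i) = 0$, which forces $f(x_i)=0$ for each $i$ since the $\mu_i$ are strictly positive; hence $f \in X^\perp$, and the reverse inclusion is again trivial.

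The crux is (ii) $\Rightarrow$ (iii), for which I would pass to duals twice. Since $C(X)$ is a finitely generated convex cone in $\bR^m$ it is closed, and the separation lemma applied to any $w \notin C(X)$ yields $C(X)^{**}=C(X)$. Under the hypothesis $C(X)^* = X^\perp$, one more pass through the duality gives
\[
C(X) \;=\; C(X)^{**} \;=\; (X^\perp)^* \;=\; X^{\perp\perp} \;=\; \mathrm{span}(X),
\]
using the elementary fact that $V^* = V^\perp$ for any subspace $V$ (if $v \in V$ and $f\in V^*$ then $\pm v \in V$ force $f(v)=0$). Therefore each $-x_i$ lies in $C(X)$, so we may write $-x_i = \sum_j \lambda_{ij} x_j$ with coefficients $\lambda_{ij}\geq 0$. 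Adding these $m$ relations gives
\[
0 \;=\; \sum_j \Big(1 + \sum_i \lambda_{ij}\Big) x_j,
\]
and the coefficients $\mu_j := 1 + \sum_i \lambda_{ij}$ are each at least $1$, producing the required strictly positive combination. The only point I expect to need any care beyond formal manipulation is the closedness of the finitely generated cone $C(X)$ in $\bR^m$, a standard fact but one worth invoking explicitly so that $C(X)^{**}=C(X)$ follows cleanly from the separation lemma.
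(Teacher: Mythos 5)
Your proof is correct and follows essentially the same route as the paper: the easy implications are handled by the same direct verifications, and the key step rests on $C(X)^{**}=C(X)$ turning the cone into a subspace, so that each $-x_j$ is a nonnegative combination of the $x_i$ and summing the $m$ relations yields strictly positive coefficients. Your explicit flagging of the closedness of the finitely generated cone (needed for the separation argument to give $C(X)^{**}=C(X)$) is a point the paper leaves implicit, and worth keeping.
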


\begin{proof} The equivalence of (i) and (ii) is immediate. If (i) holds then   the cone $C(X)=C(X)^{**}$ is also a subspace. Thus for each index $j$ 
\[
-x_j=\sum_i \lambda_{i,j}x_i \quad \mbox{with}\quad \lambda_{i,j}\geq 0,
\]
and so
\[
0=\sum_i \mu_{i,j}x_i \quad \mbox{with}\quad \mu_{i,j}\geq 0, ~~ \mu_{j,j}>0.
\]
Adding these $m$ equalities gives the desired coefficients for (iii).

If (iii) holds then for  $x^*\in C(X)^*$ we have
$
0=x_*(\sum_j \mu_{j}x_j) = \sum_j \mu_{j}x_*(x_j).
$
Since $x_*(x_j)\geq 0$ for each $j$ it follows from the strict positivity of the coefficients that $x_*(x_j)= 0$ for each $j$. Thus $ X^\perp \subseteq C(X)^*$. Since the reverse inclusion is elementary, (ii) holds. 
\end{proof}

\begin{lemma}\label{l:dichotomy} {\bf Dichotomy lemma.}
 Let $A$ be an $m$ by $n$ real matrix. Then precisely one of the following is true.

(i) There exists $u$ in $\bR^m$ with $Au \neq 0$ and $Au \in \bR^m_+$. 

(ii) There exists a row vector $\mu$ with strictly positive entries such that $\mu A=0$.
\end{lemma}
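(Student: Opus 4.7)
The plan is to recognize this as a reformulation of the Dual Cone Lemma already proved. Let $x_1,\dots,x_m$ denote the rows of $A$, viewed as vectors in $\bR^n$, and set $X=\{x_1,\dots,x_m\}$ with $C=C(X)$ the convex cone generated by $X$. Since we identify $(\bR^n)'$ with $\bR^n$ via the inner product, the dual cone is
\[
C^* = \{u\in\bR^n : x_i\cdot u \geq 0 \text{ for all } i\}
\]
and $X^\perp=\{u\in\bR^n : x_i\cdot u=0 \text{ for all } i\}$. The inclusion $X^\perp\subseteq C^*$ is automatic.

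The next step is to translate each condition into the language of this cone. Condition (i) asserts the existence of $u\in\bR^n$ (correcting the ambient space in the statement) with $Au\in\bR^m_+$ and $Au\neq 0$; equivalently, $x_i\cdot u\geq 0$ for every $i$ with strict inequality for at least one $i$. This is precisely the assertion that $C^*\setminus X^\perp$ is nonempty, i.e.\ $C^*\neq X^\perp$. Condition (ii), namely the existence of a strictly positive row vector $\mu$ with $\mu A=0$, unpacks to $\sum_i\mu_i x_i=0$ with all $\mu_i>0$. This is condition (iii) of the Dual Cone Lemma, which is equivalent to $C^*$ being a subspace and equal to $X^\perp$.

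Combining, (i) is equivalent to $C^*\neq X^\perp$, while (ii) is equivalent to $C^*=X^\perp$. These are logical negations of one another (using the always-valid $X^\perp\subseteq C^*$), so exactly one holds and the dichotomy follows.

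The argument involves no real obstacle once the dual cone viewpoint is adopted; the only delicate point is being careful about where the rows of $A$ live versus where the feasible vectors $u$ and the multipliers $\mu$ live, and then noting that $C^*\supseteq X^\perp$ is trivial so that the failure of (i) forces equality, at which point the Dual Cone Lemma delivers (ii).
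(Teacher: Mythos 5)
Your proof is correct and follows essentially the same route as the paper: identify the rows of $A$ with a generating set $X$, observe that $\{u:Au\in\bR^m_+\}=C(X)^*$ and $\{u:Au=0\}=X^\perp$, and invoke the Dual Cone Lemma to equate the failure of (i) with condition (ii). Your parenthetical correction that $u$ should live in $\bR^n$ rather than $\bR^m$ is also right, as that is a typo in the statement.
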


\begin{proof}
Let $X=\{x_1,\dots ,x_m\}$ be the set of row vectors of $A$. 
Then 
\begin{equation*}
\{u: Au=0\}=X^\perp,\quad \quad 
\{u:Au\in \bR^m_+\}=C(X)^*.
\end{equation*}
Note that (i) is false if and only if $C(X)^* = X^\perp$. By the previous lemma this occurs if and only if $\sum \mu_ix_i = 0$ for some choice of strictly positive coefficients and so (i) is false if and only if (ii) is true.
\end{proof}

To complete the proof of Theorem \ref{t:rothwhiteley} let $G(p)$ be first-order rigid. If $u$ is an infinitesimal flex, with $R(G(p))u\in \bR^m_+$ then $u$ must be a rigid motion flex and so 
$R(G(p))u =0$. This means  condition (i) in Lemma \ref{l:dichotomy} does not hold for  $A=R(G(p))$. Therefore (ii) holds and this strictly positive vector $\mu$ determines a proper equilibrium stress $\omega$. Also $\overline{G(p)}$ is first-order rigid since this property holds for $G(p)$.


\section{First-order rigidity for infinite tensegrities}\label{s:infinitetensegrities}

That the Roth-Whiteley equivalence fails to hold for countable tensegrities is evident from the following two examples.

\begin{example}\label{e:RWfails} 
Let $G(p)$ be the infinite periodic tensegrity framework in $\bR^2$ whose placement is the periodic tiling by equilateral triangles and where all the members are cables. Then $\overline{G(p)}$ is first-order rigid and there exist proper equilibrium stresses. Indeed, any stress field which is constant on any  straight line through a cable and takes negative values is such a stress. However, $G(p)$ is not first-order rigid since, for example, the contractive affine map $(x,y)\to (x/2,y/2)$ restricts to a velocity field which is an infinitesimal flex of $G(p)$. On the other hand we shall see that $G(p)$ is first-order $c_0$-rigid.
\end{example}

\begin{example}\label{e:RWfailsB}
A countable cable-strut tensegrity $G(p)$ in $\bR^2$ is indicated in Figure \ref{f:tensegritystrip1stress}. The dashed lines represent  cables between the vertex placements (the joints of $G(p)$) while the solid lines represent cable-strut pairs (equivalent to bars). The nondiagonal members all have length 1. While the companion bar-joint framework $\overline{G(p)}$ is first-order rigid, the tensegrity $G(p)$ has nontrivial infinitesimal flexes in which countably many of the upper joints have the same leftward translational velocity while all other joints are fixed. This leads to the observation that the set of infinitesimal flexes of $G(p)$ is a convex cone which is not finitely generated. The figure also indicates an equilibrium  stress field $\omega$ for $G(p)$, where a ``0" label on a cable-strut pair indicates that the total stress  $\omega_{ij,c}
+\omega_{ij,s}$ is equal to 0 for these 2 members. Choosing nonzero values for these pairs leads to $\omega$ being a proper equilibrium stress for $G(p)$.
A consequence of this example is that there is no literal counterpart to the dichotomy lemma for an infinite matrix $A$, even when $A$ has the sparse structure of a rigidity matrix. 
\begin{center}
\begin{figure}[ht]
\centering
\includegraphics[width=6cm]{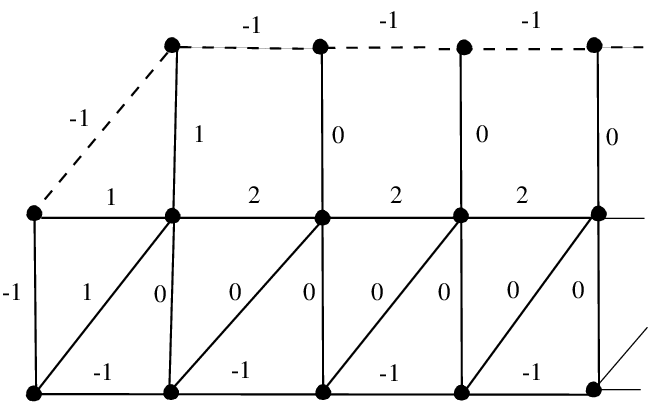}
 \caption{A countable cable-strut tensegrity with a proper equilibrium stress.}
  \label{f:tensegritystrip1stress}
\end{figure}
\end{center}
\end{example}

Let $\X$ be one of the Banach spaces $c_0$ or $\ell^q$, for $1 \leq q <\infty$.  Then the tensor product space $\X \otimes \bR^d$ may be viewed as a space of velocity fields for any countable cable-strut tensegrity $G(p)$. More precisely, this is the vector space of velocity fields $u = (u_1, u_2, \dots )$, with $u_k\in \bR^d$, which is a Banach space for a tensor product norm $\|\cdot\|=\|\cdot\|_\X\otimes \|\cdot\|'$, where $\|\cdot\|'$ is a norm on $\bR^d$. These norms are equivalent norms and in this sense independent of $\|\cdot\|'$. Let us take $\|\cdot\|'=\|\cdot\|_q$ when $\X=\ell^q$ and $\|\cdot\|'=\|\cdot\|_\infty$ when $\X=c_0$. Then the normed vector-valued sequence space
$\X\otimes \bR^d$ is naturally isometrically isomorphic to $\X$.


\begin{definition}\label{d:Xrigidity}
Let $G(p)$ be a countable tensegrity in $\bR^d$ 
and let $\X$ be a classical Banach sequence space. Then $G(p)$ is first-order $\X$-rigid (or $\X$-infinitesimally rigid) if every infinitesimal flex of $G(p)$ in  $\X \otimes \bR^d$ is trivial.
\end{definition}

In particular while the ``strip tensegrity" in Example \ref{e:RWfailsB} is infinitesimally flexible it is nevertheless $c_0$-infinitesimally rigid. To see this note that the subframework determined by the lower two rows of joints is an infinitesimally rigid tensegrity. Let $u$ be a general infinitesimal flex of $G(p)$. By adding a trivial (rigid motion) infinitesimal flex we may assume that the the velocity $u(p)$ at every joint $p$ of this subframework is zero. If $u(p)$ is nonzero for some joint $p$ in the upper row of cables then it is necessarily of the form $(-a,0)$ with $a>0$. It follows that the velocity $u(p')$ for any joint in the top row to the right of $p$ has the form
$(-a',0)$ with $a'\geq a$, and so $u$ is not in $c_0 \otimes \bR^d$.

We now define some proper asymptotic equilibrium stresses for a tensegrity $G(p)$ which are associated with the Banach sequence space $\X$. We write $\X'$ for the dual sequence space of $\X$. Recall that the rigidity matrix $R(G(p))$ accommodates the negative signs (for cables) of an equilibrium stress in the sense that a stress field $\omega$ is an equilibrium stress if and only if $|\omega|R(G(p))=0$.

\begin{definition}\label{d:aestress}
Let $G(p)$ be a countable cable-strut tensegrity and let $\omega^{(n)}$, for $n=1,2,\dots ,$ be a sequence of finitely nonzero stress fields for $G(p)$ such that 
\medskip

(i) $\omega^{(n)}_e\leq 0$ if $e$ is a cable and  $\omega^{(n)}_e\geq 0$ if $e$ is a strut,
\medskip

(ii) for every member $e$ there is $\delta_e>0$ such that $|\omega^{(n)}_e|\geq \delta_e$ for sufficiently large $n$.
\medskip

Then  
\medskip

(a) $\omega^{(n)}$ is a \emph{proper $\X'$-asymptotic equilibrium stress} for $G(p)$ if $|\omega^{(n)}|R(G(p))$  tends to 0 in $\X'\otimes \bR^d$ as $n\to \infty$,
\medskip

(b) $\omega^{(n)}$ is a \emph{proper $(\X',\X)$-asymptotic equilibrium stress} for $G(p)$ if $|\omega^{(n)}|R(G(p))$  tends to 0 in the weak topology of
$\X'\otimes \bR^d$.
\end{definition}

Note that there is no requirement that the sequence $\omega^{(n)}$ is subject to any kind of convergence.
The convergence condition in (b) ensures that $|\omega^{(n)}|R(G(p))u$  tends to 0 when the velocity field $u$ is in $\X\otimes \bR^{d}$, and so for our contexts below (a) is a stronger requirement than (b).

We consider tensegrities 
$G(p)$ with \emph{uniform structure} by which we mean that there is an upper bound for the degrees of the vertices of $G$ and an upper bound for the lengths of the members of $G(p)$. In this case $R(G(p))$ is a bounded linear transformation from $\X \otimes \bR^d$ to $\X \otimes \bR$ (since the bar lengths are uniformly bounded) and the transpose matrix $R(G(p))^T$ is a bounded linear transformation from $\X \otimes \bR$ to $\X \otimes \bR^d$ (since $G(p)$ has uniform structure). 

In the next lemma we note that a proper equilibrium stress in $\X'\otimes \bR$, for $\X\neq \ell^1$, provides a proper $\X$-asymptotic equilibrium stress. 

\begin{lemma}\label{l:ESimpliesasymptoticES}
Let $G(p)$ be a cable-strut tensegrity with uniform structure and let $\X$ be $c_0$ or $\ell^q$, for $1<q<\infty,$ with dual space $\X'$. Let $\omega = (\omega_1, \omega_2, \dots )$ be a proper equilibrium stress field for $G(p)$ with  $\omega \in\X' \otimes \bR$. For each $n$ let $\omega^{(n)}$ be the 
stress field $(\omega_1, \dots , \omega_n, 0, 0, \dots)$. Then the sequence $(\omega^{(n)})$ is a proper $\X'$-asymptotic equilibrium stress for $G(p)$.
\end{lemma}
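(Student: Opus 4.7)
The plan is to reduce conditions (i) and (ii) of Definition \ref{d:aestress} to elementary truncation bookkeeping, and to derive the convergence condition (a) from the exact equilibrium identity $|\omega|R(G(p))=0$ combined with the boundedness of the transpose operator on the dual sequence space.

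First I would dispose of (i) and (ii). Because $\omega^{(n)}$ is the coordinate-wise truncation of $\omega$, every entry $\omega^{(n)}_e$ is either $\omega_e$ (and hence inherits the sign required by cable or strut membership) or $0$, so (i) is immediate. For (ii) I would set $\delta_e := |\omega_e|$; this is strictly positive because $\omega$ is proper, and once $n$ exceeds the index of $e$ we have $|\omega^{(n)}_e|=|\omega_e|=\delta_e$.

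For condition (a) I would introduce the tail $\tau^{(n)} := |\omega|-|\omega^{(n)}| = (0,\ldots,0,|\omega_{n+1}|,|\omega_{n+2}|,\ldots)$. Because $\omega\in\X'\otimes\bR$ and the entrywise modulus is isometric on $\X'$, the tails satisfy $\|\tau^{(n)}\|_{\X'\otimes\bR}\to 0$; this is the basic fact that the tails of any element of $\ell^1$ (when $\X=c_0$) or of $\ell^{q'}$ with $1<q'<\infty$ (when $\X=\ell^q$ with $1<q<\infty$) tend to zero in norm. The equilibrium identity $|\omega|R(G(p))=0$ then gives
\[
|\omega^{(n)}|R(G(p)) \;=\; -\tau^{(n)}R(G(p)),
\]
and interpreting left multiplication by a row vector as right multiplication by the transpose on a column vector yields
\[
\bigl\| |\omega^{(n)}|R(G(p)) \bigr\|_{\X'\otimes\bR^d} \;\le\; \bigl\|R(G(p))^T\bigr\|\cdot \bigl\|\tau^{(n)}\bigr\|_{\X'\otimes\bR} \;\longrightarrow\; 0,
\]
which is (a).

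The main, though modest, obstacle is to justify that $R(G(p))^T$ is bounded as an operator $\X'\otimes\bR\to\X'\otimes\bR^d$, since the paragraph preceding the lemma only records its boundedness with $\X$ in place of $\X'$. I would handle this by duality: the uniform-structure hypothesis gives $R(G(p))$ as a bounded operator $\X\otimes\bR^d\to\X\otimes\bR$, so its Banach-space adjoint is automatically bounded in the reverse direction on the duals, and under the canonical identifications $(\X\otimes\bR)'\cong\X'\otimes\bR$ and $(\X\otimes\bR^d)'\cong\X'\otimes\bR^d$ the adjoint is represented precisely by the transpose matrix $R(G(p))^T$. Alternatively one can repeat the uniform-structure argument verbatim: the bounded vertex degrees and bounded member lengths give the same row- and column-sum estimates for $R(G(p))^T$, yielding boundedness on every classical sequence space and in particular on $\X'$.
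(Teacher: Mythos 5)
Your proof is correct and follows essentially the same route as the paper's: truncation tails of $\omega$ tend to zero in $\X'\otimes\bR$ (this is where the exclusion of $\X=\ell^1$, whose dual is $\ell^\infty$, is used), and boundedness of the transpose operator together with $|\omega|R(G(p))=0$ gives condition (a). You additionally make explicit two points the paper leaves implicit — the verification of conditions (i) and (ii), and the boundedness of $R(G(p))^T$ on $\X'\otimes\bR$ rather than $\X\otimes\bR$, which you correctly settle by duality or by rerunning the uniform-structure estimate.
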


\begin{proof} In view of the hypotheses for $\X$ the sequence $\omega^{(n)}$ tends to $ \omega$ in $\X\otimes \bR^d$ as $n\to \infty$. Since $G(p)$ has uniform structure,
$|\omega^{(n)}| R(G(p))\to |\omega| R(G(p))=0.$
%
\end{proof}

Any proper equilibrium stress for the strip framework of Example  \ref{e:RWfailsB} 
must have the same negative value $\omega_e$ for all the cable members $e$ and so there is no  proper equilibrium stress in $\X' \otimes \bR$ when $\X$ is $c_0$ or $\ell^q, 1 < q<\infty$. However, there do exist proper $(\X',\X)$-asymptotic equilibrium stresses which are obtained by truncating, in the manner of the previous lemma, the $\ell^\infty \otimes \bR$ equilibrium stress given in Example  \ref{e:RWfailsB}. 

The next lemma  generalises the sufficiency direction of the Roth-Whiteley theorem. 

\begin{lemma}\label{l:generalsufficiency}
Let $G(p)$ be a countable cable-strut tensegrity with uniform structure, let $\X$ be one of the Banach sequence spaces $c_0, \ell^q,$ for $ 1\leq q <\infty$, and suppose that
$\overline{G(p)}$ is first-order $\X$-rigid and there exists a proper $(\X',\X)$-asymptotic equilibrium stress.
Then $G(p)$ is first-order $\X$-rigid.
\end{lemma}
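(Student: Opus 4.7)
The plan is to transplant the finite-dimensional sufficiency argument from the proof of Theorem \ref{t:rothwhiteley} to the countable setting, replacing the single stress $\omega$ by the approximating sequence $\omega^{(n)}$ and using the $(\X',\X)$-weak convergence to run the dual pairing argument in the limit.

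I would start by letting $u \in \X \otimes \bR^d$ be an infinitesimal flex of $G(p)$ and setting $\sigma := R(G(p))u$. Because $R(G(p))$ absorbs the cable/strut signs, each coordinate $\sigma_e$ is nonnegative; the uniform-structure hypothesis gives $\sigma \in \X \otimes \bR$ via boundedness of $R(G(p))$. Next I would compute the pairing $\langle |\omega^{(n)}|, \sigma\rangle = \sum_e |\omega^{(n)}_e|\sigma_e$. Since each $\omega^{(n)}$ is finitely supported and each row of $R(G(p))$ is finitely supported (vertices have bounded degree), this is a finite sum in which Fubini/re-association is trivially valid, and it equals $\langle |\omega^{(n)}| R(G(p)), u\rangle$.

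Now I would invoke the hypothesis on the asymptotic equilibrium stress: $|\omega^{(n)}| R(G(p)) \to 0$ in the weak topology of $\X' \otimes \bR^d$. Pairing this sequence of finitely supported vectors against the fixed element $u \in \X \otimes \bR^d$ gives $\langle |\omega^{(n)}| R(G(p)), u\rangle \to 0$, hence $\sum_e |\omega^{(n)}_e|\sigma_e \to 0$. Since every summand is nonnegative, each individual term tends to $0$ as well. For any fixed member $e_0$, property (ii) of Definition \ref{d:aestress} provides some $\delta_{e_0}>0$ with $|\omega^{(n)}_{e_0}|\geq \delta_{e_0}$ for all large $n$; together with $|\omega^{(n)}_{e_0}|\sigma_{e_0}\to 0$ this forces $\sigma_{e_0}=0$. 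Ranging over $e_0$ yields $\sigma = R(G(p))u = 0$, and hence (by the observation in Section \ref{s:section1} that $R(G(p))u=0$ is equivalent to $R(G,p)u=0$) $u$ is an infinitesimal flex of $\overline{G(p)}$. The assumed $\X$-rigidity of $\overline{G(p)}$ then forces $u$ to be trivial.

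I do not expect a serious obstacle; the only place deserving a moment's care is the meaning of the weak topology on $\X' \otimes \bR^d$ across the various cases. For $\X=\ell^q$ with $1<q<\infty$ the space $\X'$ is reflexive and weak/weak-$*$ convergence agree, so pairing against $u\in \X \otimes \bR^d$ is immediate; for $\X=c_0$ the dual $\X'=\ell^1$ still carries $\X \hookrightarrow (\X')'$ canonically; and for $\X=\ell^1$ the weak topology on $\ell^\infty$ is defined against $(\ell^\infty)'\supset \ell^1$, so again the required pairing against $u\in \ell^1 \otimes \bR^d$ converges. In all three regimes the single identity $\langle |\omega^{(n)}|R(G(p)), u\rangle = \sum_e |\omega^{(n)}_e|\sigma_e$ combined with weak convergence suffices, and the proof proceeds as above.
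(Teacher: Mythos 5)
Your proposal is correct and follows essentially the same route as the paper's own proof: pair $|\omega^{(n)}|$ against $\sigma=R(G(p))u$, use the finite support of $\omega^{(n)}$ to justify the re-association $\langle |\omega^{(n)}|,\sigma\rangle = \langle |\omega^{(n)}|R(G(p)),u\rangle$, let the weak convergence kill the nonnegative sums, and then use the lower bounds $\delta_e$ from Definition \ref{d:aestress}(ii) to force $\sigma_e=0$ for every member. Your extra remarks on the meaning of the weak topology in the three regimes are a harmless elaboration of what the paper states in the comment following Definition \ref{d:aestress}.
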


\begin{proof} Let $\overline{G(p)}$ be first-order $\X$-rigid and let $\omega^{(n)}$ be a proper $(\X',\X)$-asymptotic equilibrium stress for $G(p)$. For each $n$ let $\mu^{(n)}$ be the nonnegative stress field with $\mu^{(n)}_e =|\omega^{(n)}_e|$. Let $u\in \X\otimes \bR^d$ be an infinitesimal flex so that the stress field $\sigma= R(G(p))u$ is nonnegative, with $\sigma_e\geq 0$ for all $e$. 
We have
\[
\mu^{(n)}R(G(p))u=\mu^{(n)}\cdot \sigma = \sum_e \mu^{(n)}_e\sigma_e,
\] 
where the sum is finite for each $n$ and the terms $\mu^{(n)}_e\sigma_e$ are nonnegative.
On the other hand since $\mu^{(n)}R(G(p))$ tends to zero, these finite sums also tend to zero, as $n$ tends to infinity, and so $\mu^n_e \geq \delta_e$ for some $\delta_e$ and all $n\geq n_e$.
It follows that $\sigma_e=0$ for each $e$. Thus $u$ is an infinitesimal flex for $\overline{G(p)}$ and so $u$ is a rigid motion infinitesimal flex, as required.
\end{proof}

\subsection{Closed convex cones and first-order $\X$-rigidity.}

We now obtain the following generalisation of the Roth-Whiteley theorem. Note that the sufficiency direction follows from Lemma \ref{l:generalsufficiency}.

\begin{thm}\label{t:generalrothwhiteley}
Let $G(p)$ be a countable cable-strut tensegrity with uniform structure and let $\X$ be one of the Banach sequence spaces $c_0, \ell^q,$ for $ 1\leq q <\infty$. Then $G(p)$ is first-order $\X$-rigid if and only if $\overline{G(p)}$ is first-order $\X$-rigid and there exists a proper $\X'$-asymptotic equilibrium stress.
\end{thm}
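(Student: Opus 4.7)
The sufficiency direction is exactly Lemma \ref{l:generalsufficiency}, so I focus on necessity. Assume $G(p)$ is first-order $\X$-rigid. That $\overline{G(p)}$ is first-order $\X$-rigid is immediate: any infinitesimal flex $u\in\X\otimes\bR^d$ of $\overline{G(p)}$ satisfies $R(G(p))u=0$ (using the equivalence $R(G(p))u=0\iff R(G,p)u=0$ recorded in Section \ref{s:section1}), hence is an infinitesimal flex of the tensegrity $G(p)$ and thus trivial by hypothesis. The substantive task is to produce a proper $\X'$-asymptotic equilibrium stress.

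My plan is to enumerate the members $E=\{e_1,e_2,\dots\}$ and construct, for each $n\geq 1$, a finitely supported non-negative $\mu^{(n)}\in\ell^1(E)$ such that $\mu^{(n)}_{e_k}\geq 1$ for every $k\leq n$ and $\|R(G(p))^T\mu^{(n)}\|_{\X'\otimes\bR^d}<1/n$. Setting $\omega^{(n)}_e=-\mu^{(n)}_e$ on cables and $\omega^{(n)}_e=+\mu^{(n)}_e$ on struts yields a proper $\X'$-asymptotic equilibrium stress in the sense of Definition \ref{d:aestress}(a), with $\delta_e=1$ for every member $e$. The construction of $\mu^{(n)}$ reduces to a single-member assertion: for every $e_0\in E$ and every $\epsilon>0$, there exists a finitely supported $\nu\geq 0$ with $\nu_{e_0}=1$ and $\|R(G(p))^T\nu\|_{\X'\otimes\bR^d}<\epsilon$. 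Indeed, if such a $\nu^{(k)}$ is produced for each $k$ with tolerance $1/n^2$, the sum $\mu^{(n)}=\sum_{k=1}^n \nu^{(k)}$ meets the requirements.

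For the single-member assertion I argue by contradiction. Writing $\nu=e_{e_0}+\lambda$ with $\lambda\geq 0$ finitely supported, the assertion is equivalent to the statement that the distance in $\X'\otimes\bR^d$ from the point $-r_{e_0}:=-R(G(p))^Te_{e_0}$ to the convex cone $C$ generated by the rows $\{r_e:e\in E\}$ of $R(G(p))$ is zero; note that each $r_e$ has only two nonzero blocks and lies in $\X'\otimes\bR^d$ by the uniform structure hypothesis. If this distance were strictly positive, Hahn-Banach separation (Theorem \ref{t:conwayseparation}) applied to the norm-closed convex cone $\overline{C}$ would produce a continuous linear functional $\phi$ on $\X'\otimes\bR^d$ with $\phi(-r_{e_0})<0$ and $\phi(r_e)\geq 0$ for all $e\in E$. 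In the reflexive cases $\X=\ell^q$, $1<q<\infty$, such a $\phi$ corresponds via $\X''=\X$ to a vector $u\in\X\otimes\bR^d$, and the identity $\phi(r_e)=(R(G(p))u)_e$ shows $R(G(p))u\in\X_+$, so $u$ is an $\X$-infinitesimal flex of $G(p)$. By hypothesis $u$ is trivial, hence $R(G(p))u=0$, whence $\phi(-r_{e_0})=-(R(G(p))u)_{e_0}=0$, contradicting $\phi(-r_{e_0})<0$.

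The main obstacle is the non-reflexive cases $\X=c_0$ and $\X=\ell^1$, where $\X''\otimes\bR^d$ strictly contains $\X\otimes\bR^d$ and a generic separating functional need not be a velocity field to which first-order $\X$-rigidity applies (for instance, a bounded non-trivial flex in $\ell^\infty\otimes\bR^d$ over the periodic triangular tiling of Example \ref{e:RWfails} would defeat a direct Hahn-Banach argument in $c_0$). Resolving this is precisely the role of Lemma \ref{l:doubledualconeSUB}: it guarantees that a suitable separation of $-r_{e_0}$ from the closed convex cone $\overline{C}$ can be realised by a functional in the smaller space $\X\otimes\bR^d$, essentially by identifying a second dual cone relative to the canonical embedding $\X\hookrightarrow\X''$. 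Once such a restricted separator is available, the reflexive-case argument runs through unchanged, yielding the single-member assertion and hence the proper $\X'$-asymptotic equilibrium stress.
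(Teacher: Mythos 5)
Your treatment of sufficiency and of the rigidity of $\overline{G(p)}$ matches the paper. For necessity your architecture is genuinely different: you localise to a single member, aiming to show $-r_{e_0}$ lies in the norm closure $\overline{C}$ of the cone generated by the rows for each member $e_0$, and then sum finitely many approximants; the paper (Lemma \ref{l:dichotomyBanach}) instead argues globally, deducing from the failure of alternative (i) that the dual cone $C(X)^*$ equals the annihilator $X^\perp$ and is therefore a subspace, then invoking Lemma \ref{l:doubledualconeSUB} to conclude that $C(X)$ itself is a subspace, and finally applying this to the single summable element $y=x_1+\tfrac12x_2+\tfrac14x_3+\cdots$ to get $-y\in C(X)$ and hence the asymptotic stress with $\delta_{e_k}=2^{-k}$ in one stroke. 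Your per-member reduction is sound (the bookkeeping with tolerances $1/n^2$ and $\delta_e=1$ checks out), and in the reflexive cases $\X=\ell^q$, $1<q<\infty$, your separation argument is complete, since every continuous functional on $\X'\otimes\bR^d$ is a velocity field in $\X\otimes\bR^d$ to which first-order $\X$-rigidity applies.

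The gap is in the cases $\X=c_0$ and $\X=\ell^1$, which you correctly flag but do not close. Lemma \ref{l:doubledualconeSUB} does not do what you ascribe to it: applied to the closed cone $\overline{C}$ in $\Y=\X'\otimes\bR^d$, it says only that a point outside $\overline{C}$ is separated from it by some element of the full dual $\Y'=\X''\otimes\bR^d$ --- which is exactly what Hahn--Banach already gave you. It contains no mechanism for moving the separator into the proper subspace $\X\otimes\bR^d\subsetneq\X''\otimes\bR^d$; doing so would amount to separating $-r_{e_0}$ from $\overline{C}$ by a weak-$*$ continuous functional, which would require $\overline{C}$ to be weak-$*$ closed and is not established. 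As it stands, for $\X=c_0$ your separator is merely an $\ell^\infty$ velocity field (and for $\X=\ell^1$ an element of $(\ell^\infty)'$), and first-order $\X$-rigidity says nothing about such flexes, so the contradiction does not materialise. It is worth noting that the corresponding step in the paper's own proof --- the identification of $C(X)^*$ with $X^\perp$ inside $(\X'\otimes\bR^d)'$ from the failure of (i) --- rests on the same identification of that dual with $\X\otimes\bR^d$ and is only explicit in the reflexive cases; so the difficulty you isolated is a real one, but your proposed resolution via Lemma \ref{l:doubledualconeSUB} is not a proof of it.
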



A consequence of the Hahn-Banach theorem for locally convex topological vector spaces is the following general separation theorem.
For a proof see Theorem 3.9 in Conway \cite{con}.

\begin{thm}\label{t:conwayseparation} Let $\X$ be a real locally convex topological vector space and let $A$ and $B$ be two disjoint closed
convex subsets of $\X$. If $B$ is compact, then $A$ and $B$ are strictly separated.
\end{thm}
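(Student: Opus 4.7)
The plan is to reduce the two-set separation to the more elementary case of separating the origin from a closed convex set that misses it, and then to invoke the open-convex-set form of Hahn--Banach, which is the standard consequence of a Minkowski functional construction in a locally convex space.

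First I would form the difference set $C = A - B$. This is convex, as a sum of convex sets, and $0 \notin C$ since $A \cap B = \emptyset$. The critical step, and the one where compactness of $B$ is used, is to show that $C$ is \emph{closed}. For this I would argue with nets: given $a_\lambda - b_\lambda \to c$ with $a_\lambda \in A$, $b_\lambda \in B$, compactness of $B$ lets me pass to a subnet along which $b_\lambda \to b \in B$; then $a_\lambda \to c + b$, and closedness of $A$ forces $c + b \in A$, so $c = (c+b) - b \in C$.

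Since $C$ is closed and avoids the origin, local convexity of $\X$ supplies an open convex neighbourhood $U$ of $0$ with $U \cap C = \emptyset$. Applying the standard geometric Hahn--Banach theorem (separation of an open convex set from a disjoint convex set) to $U$ and $C$, I obtain a nonzero continuous linear functional $f\colon \X \to \bR$ and a real $\alpha$ with $f(u) < \alpha \leq f(c)$ for all $u \in U$ and $c \in C$. Since $0 \in U$, necessarily $\alpha > 0$, so $f(a) - f(b) = f(a-b) \geq \alpha$ for every $a \in A$, $b \in B$. Because $B$ is compact and $f$ is continuous, $\sup_B f$ is finite (and attained). Setting $\beta = \sup_B f + \alpha/2$ yields $f(b) \leq \sup_B f < \beta < \sup_B f + \alpha \leq \inf_A f \leq f(a)$, which is strict separation.

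The main obstacle is the closedness of $A - B$: without compactness of $B$ this can fail even in $\bR^2$ (for instance, with $A = \{(x,y) : y \geq 1/x,\ x > 0\}$ and $B$ the $x$-axis, the difference is an open half-plane), so the compactness hypothesis enters the argument essentially through the subnet extraction. The appeal to nets rather than sequences is also necessary since $\X$ need not be metrizable. Everything else --- the existence of $U$ from local convexity, and the open-vs-convex Hahn--Banach separation --- is a routine Minkowski-functional argument that I would quote as a black box.
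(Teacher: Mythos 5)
Your proof is correct. The paper does not prove this theorem itself but simply cites it as Theorem 3.9 of Conway's \emph{A Course in Functional Analysis}, and the argument you give --- forming $A-B$, using compactness of $B$ via a subnet to show it is closed, and then separating it from an open convex neighbourhood of the origin by the geometric Hahn--Banach theorem --- is precisely the standard proof found there, including the correct placement of the compactness hypothesis and the final strict-separation bookkeeping with $\beta=\sup_B f+\alpha/2$.
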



Strict separation means that there exists a real value $\alpha$ and a continuous linear functional $f$ such that $f(b)<\alpha< f(a)$ for all $b\in B, a \in A$.
Applying this to a singleton set $B=\{w\}$ and a closed convex cone $A$, with $w\notin A$, we have $\alpha < 0 = f(0)$. Also, since $A$ is a cone we cannot have $f(a) <0$ for any $a \in A$ for otherwise $f(\beta a) < \alpha $ for some $\beta>0$. Thus we have the following separation lemma generalising Lemma \ref{l:separationHilbert}.

\begin{lemma}\label{l:coneseparation} Let $C$ be a closed convex cone in a real locally convex topological vector space  $\X$ and let $w\in \X\backslash C$. Then there is a continuous linear functional $f$ on $\X$ with $f(w)< 0 \leq f(x)$ for all $x\in C$.
\end{lemma}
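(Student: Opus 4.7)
The plan is to apply Theorem \ref{t:conwayseparation} directly to the disjoint closed convex sets $A=C$ and $B=\{w\}$. The singleton $\{w\}$ is compact in every topological vector space, $C$ is closed and convex by hypothesis, and the assumption $w\notin C$ gives $A\cap B=\emptyset$. The theorem then produces a continuous linear functional $f$ on $\X$ and a real scalar $\alpha$ with $f(w)<\alpha<f(c)$ for every $c\in C$.

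The two conclusions of the lemma are extracted by using the cone axiom in two different ways. First, since $C$ is a nonempty cone it is closed under multiplication by $0$, and hence $0\in C$. The separation inequality applied at $c=0$ gives $\alpha<f(0)=0$, so $f(w)<\alpha<0$, which establishes the left-hand inequality $f(w)<0$. Second, to upgrade the strict bound $f(c)>\alpha$ to the asserted $f(c)\geq 0$, I would argue by contradiction: if $f(c_0)<0$ for some $c_0\in C$, then by the cone property $\beta c_0\in C$ for every $\beta\geq 0$, so choosing $\beta$ large enough yields $f(\beta c_0)=\beta f(c_0)<\alpha$, contradicting the separating inequality. Hence $f(c)\geq 0$ on $C$.

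There is no substantive obstacle; the lemma is essentially an immediate corollary of the locally convex Hahn-Banach separation theorem cited from Conway. The only thing that requires any care is the double use of the cone axiom noted above: once to locate the origin on the correct side of $\alpha$ (yielding the strict inequality $f(w)<0$), and once, via an unboundedness/scaling argument, to rule out negative values of $f$ anywhere on $C$ (yielding the non-strict inequality $f(c)\geq 0$ that replaces the strict $f(c)>\alpha$ originally produced by the separation theorem).
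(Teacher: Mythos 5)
Your proposal is correct and follows essentially the same route as the paper: the paper likewise applies Theorem \ref{t:conwayseparation} to $B=\{w\}$ and $A=C$, uses $0\in C$ to place $\alpha<f(0)=0$ and hence $f(w)<0$, and rules out $f(a)<0$ on $C$ by scaling $a$ by a large $\beta\geq 0$ to violate the separating inequality. No gaps.
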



 Henceforth we assume that $\X$ is one of the Banach sequence spaces  $\ell^q, 1\leq q<\infty$, or $c_0$, and we write $\hat{x}$ for the image of $x \in \X$ under the canonical injection from $\X$ to its second dual space $\X''$.

As a substitute for the double dual cone identity $\hat{C}=C^{**}$ in finite dimensions we have the following lemma. 

\begin{lemma}\label{l:doubledualconeSUB}
Let $C$ be a closed convex cone in a Banach space $\Y$ and let $\hat{\Y}$ be the natural embedding of $\Y$ in the second dual $\Y''$. Then $\hat{C}= C^{**}\cap \hat{\Y}$.
\end{lemma}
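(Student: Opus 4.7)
The plan is to prove the equality by double inclusion, with the forward inclusion essentially definitional and the reverse inclusion obtained via the separation lemma (Lemma \ref{l:coneseparation}).

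For the inclusion $\hat{C} \subseteq C^{**}\cap \hat{\Y}$, I would take $x \in C$ and verify directly that $\hat{x} \in C^{**}$. By definition, $C^* = \{f \in \Y' : f(y) \geq 0 \text{ for all } y \in C\}$, so for any $f \in C^*$ we have $\hat{x}(f) = f(x) \geq 0$. Hence $\hat{x} \in C^{**}$, and trivially $\hat{x} \in \hat{\Y}$.

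For the reverse inclusion $C^{**}\cap \hat{\Y} \subseteq \hat{C}$, I would argue by contradiction. Let $\phi \in C^{**}\cap \hat{\Y}$, so $\phi = \hat{x}$ for a unique $x \in \Y$, and suppose $x \notin C$. Since $\Y$ is a Banach space and hence a real locally convex topological vector space, and $C$ is a closed convex cone, Lemma \ref{l:coneseparation} yields a continuous linear functional $f \in \Y'$ with $f(x) < 0 \leq f(y)$ for all $y \in C$. The second inequality says exactly that $f \in C^*$. But then, by the definition of $C^{**}$ and the identification of $\hat{x}$ as evaluation at $x$,
\[
0 \leq \hat{x}(f) = f(x) < 0,
\]
a contradiction. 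Therefore $x \in C$, i.e.\ $\phi = \hat{x} \in \hat{C}$.

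I do not anticipate a genuine obstacle here, as the argument is a standard bipolar-theorem style computation; the only subtle point is that the separation lemma must be applied in the original space $\Y$ (where $C$ lives and is closed) rather than in $\Y''$ (where $C^{**}$ is closed in the weak-$*$ topology but $\hat{C}$ need not be). That is precisely why one cannot hope for the equality $\hat{C} = C^{**}$ in general and must intersect with $\hat{\Y}$.
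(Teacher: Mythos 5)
Your proof is correct and follows essentially the same route as the paper: the forward inclusion is noted as immediate, and the reverse inclusion is obtained by applying the separation lemma (Lemma \ref{l:coneseparation}) to a point $w\notin C$ to produce $f\in C^*$ with $f(w)<0$, whence $\hat{w}\notin C^{**}$. The paper states this as a contrapositive rather than a contradiction, but the argument is the same.
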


\begin{proof}
The inclusion of $\hat{C}$ in the intersection is clear. Let $w\in \X\backslash  C$. By Lemma \ref{l:coneseparation} there exists $f$ in $C^*$ with $f(w)< 0$. In particular $\hat{w} \notin C^{**}$.
\end{proof}

Example \ref{e:RWfails} shows that the dichotomy lemma does not extend in a literal way to infinite matrices. However we have the following extension to tensegrities with uniform structure and this completes the proof of Theorem \ref{t:generalrothwhiteley}.  We write $\bR^E_+$ to denote the nonnegative cone in the direct product space $\bR^E=\prod_{e\in E}\bR$.

\begin{lemma}\label{l:dichotomyBanach}
Let $G(p)$ be a countable tensegrity in $\bR^d$ with uniform structure which is first-order $\X$-rigid. Then precisely one of the following is true.
\medskip

(i) There exists $u$ in $\X \otimes \bR^d$ with $R(G(p))u$ a nonzero element of the cone  $\bR_+^E$.

(ii) There exists a proper $\X'$-asymptotic equilibrium stress $\omega^{(n)}$ for $G(p)$.
\end{lemma}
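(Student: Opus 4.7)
The plan is to argue the two directions of ``precisely one'' separately, with mutual exclusion routine and the existence direction the substantive part, handled by Hahn--Banach separation together with Lemma~\ref{l:doubledualconeSUB}.

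For mutual exclusion, suppose both (i) and (ii) held. Pick $u\in W:=\X\otimes\bR^d$ with $Ru\in\bR^E_+$ nonzero, say $(Ru)_{e_0}>0$, and a proper $\X'$-asymptotic equilibrium stress $\omega^{(n)}$; set $\mu^{(n)}=|\omega^{(n)}|$. The duality pairing
\[
\langle \mu^{(n)} R(G(p)),u\rangle \;=\; \sum_{e}\mu^{(n)}_e (Ru)_e
\]
forces the left side to zero while the right side is eventually bounded below by $\delta_{e_0}(Ru)_{e_0}>0$, a contradiction.

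For the existence direction I argue the contrapositive: failure of (ii) produces a $u$ as in (i). Writing $V:=\X'\otimes\bR^d$ and exhausting $E$ by an increasing sequence of finite sets, a diagonalisation makes (ii) equivalent to the assertion that $0$ lies in the closure in $V$ of
\[
K_F\;=\;\{R(G(p))^T\mu : \mu\in c_{00,+}(E),\ \mu_e\ge 1\ \forall e\in F\}
\]
for every finite $F\subseteq E$. If (ii) fails, some $F_0$ has $0\notin\overline{K_{F_0}}$ in $V$; Theorem~\ref{t:conwayseparation} then yields $f\in V^*$ and $\alpha>0$ with $f(R^T\mu)>\alpha$ for every $\mu\in K_{F_0}$. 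Testing on $\mu=\sum_{e\in F_0}\delta_e+\lambda\delta_{e'}$ and letting $\lambda\to\infty$ forces $\sigma_{e'}:=f(R^T\delta_{e'})\ge 0$ for every $e'\in E$, while $\lambda=0$ yields $\sum_{e\in F_0}\sigma_e>\alpha>0$. Thus $\sigma=f\circ R^T$ is a coordinatewise nonnegative, nonzero element of $\X''$. In the reflexive case $\X=\ell^q$ with $1<q<\infty$ one has $V^*\cong W$ canonically, so $f=\hat u$ for some $u\in W$ and $(Ru)_e=\sigma_e$ realises (i) directly.

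The main obstacle is the non-reflexive cases $\X=c_0$ and $\X=\ell^1$, in which $f$ lies a priori only in the bidual $W''\supsetneq\hat W$. Here I would apply Lemma~\ref{l:doubledualconeSUB} to the closed convex cone $D:=R^{-1}(\X_+)\subseteq W$ of tensegrity infinitesimal flexes. Bipolar duality identifies $D^*\subseteq V$ with the weak-$*$-closed convex cone generated by $\{R^T\delta_e:e\in E\}$, and the lemma then gives $\hat D=D^{**}\cap\hat W$. Since $f(R^T\delta_e)=\sigma_e\ge 0$ already places $f$ on the nonnegative side of the algebraic cone generating $D^*$, the remaining step is to promote this to $f\in D^{**}\cap\hat W$; once secured, the lemma yields $f=\hat u$ with $u\in D$, and $\widehat{Ru}=R^{TT}\hat u=\sigma\ne 0$ gives $Ru\in\X_+\setminus\{0\}$, establishing (i). Reconciling the norm topology of the separation with the weak-$*$ topology governing $D^*$ is the delicate point; I would handle it either by rerunning the separation in the weak-$*$ topology of $V$ (so that the separator automatically lies in $\hat W$), after enlarging $K_{F_0}$ to positive elements of $\X'$ in a weak-$*$-closed fashion, or by a Goldstine-type approximation of $f$ by elements of $\hat W$ that respects the inequalities $\sigma_e\ge 0$.
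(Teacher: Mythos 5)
Your mutual-exclusion argument is the same pairing computation the paper uses (it is essentially Lemma \ref{l:generalsufficiency}), and your reduction of (ii) to the statement that $0$ lies in the norm closure of each $K_F$ is correct. The problem is your existence direction, which runs the dichotomy in the opposite contrapositive to the paper: you try to extract a flex $u$ realising (i) from the failure of (ii). This works for $\X=\ell^q$, $1<q<\infty$, but leaves a genuine gap in the cases $\X=c_0$ and $\X=\ell^1$, which you acknowledge but do not close. The separating functional $f$ supplied by Theorem \ref{t:conwayseparation} lives in $(\X'\otimes\bR^d)'=\X''\otimes\bR^d$, and neither of your proposed repairs descends it to $\X\otimes\bR^d$. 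Rerunning the separation in the weak-$*$ topology $\sigma(\X'\otimes\bR^d,\X\otimes\bR^d)$ would indeed force the separator into $\X\otimes\bR^d$, but the failure of (ii) only guarantees that $0$ avoids the \emph{norm} closure of some $K_{F_0}$; for non-reflexive $\X$ the weak-$*$ closure of a convex set can be strictly larger than its norm closure, so the hypothesis of the separation theorem may simply fail, and at best this route speaks to the weaker $(\X',\X)$-asymptotic stresses of Definition \ref{d:aestress}(b), not to condition (ii). A Goldstine-type approximation of $f$ by elements of the canonical image of $\X\otimes\bR^d$ is only weak-$*$ convergent and preserves neither the strict inequality $f>\alpha$ on the norm-closed, non-compact set $K_{F_0}$ nor the infinite family of constraints $\sigma_e\ge 0$. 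So the non-reflexive half of the lemma is not proved.

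The paper sidesteps the descent problem entirely by proving the other contrapositive, ``not (i) implies (ii)''. It forms the norm-closed convex cone $C(X)$ in $\X'\otimes\bR^d$ generated by the rows $x_1,x_2,\dots$ of $R(G(p))$, identifies the failure of (i) with the assertion that the dual cone $C(X)^*$ equals the annihilator $X^\perp$ and is therefore a subspace, and then invokes Lemma \ref{l:doubledualconeSUB} to conclude that $C(X)$ itself is a linear subspace. Consequently $-(x_1+\tfrac12x_2+\tfrac14x_3+\cdots)$ is a norm limit of finite nonnegative combinations $\sigma^{(n)}$ of rows, and the sums $x_1+\cdots+2^{-n}x_n+\sigma^{(n)}\to 0$ are read off directly as $\mu^{(n)}R(G(p))$ with $\mu^{(n)}_k\ge 2^{-k}$ for $n\ge k$, i.e.\ as the required proper $\X'$-asymptotic equilibrium stress. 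The structural point you are missing is that the object demanded by (ii) is itself an approximation statement taking place inside $\X'\otimes\bR^d$, so no identification of a bidual functional with an element of $\X$ is ever needed; by contrast the object demanded by (i) is an exact element of $\X\otimes\bR^d$, which is precisely what non-reflexivity obstructs. To salvage your route you would either have to restrict to the reflexive range or convert your separation data into the statement that $C(X)$ is a subspace and then follow the paper's construction.
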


\begin{proof}
If (i) holds then (ii) is false by the sufficiency direction of Theorem \ref{t:generalrothwhiteley}. 

Assume then that (i) does not hold. Since $G(p)$ has uniform structure  $R(G(p))u$ lies in $\X\otimes \bR$ for any velocity field $u$ in $\X \otimes \bR^d$.
 Let $X=\{x_1, x_2, \dots \}$ be the rows of $R(G(p))$ and let $C(X)$ be the closed convex cone in $\X'\otimes \bR^d$ generated by $X$. As in the earlier dichotomy lemma, any velocity field $u$ may be viewed as a continuous linear functional on the cone $C(X)$. The failure of (i) is the assertion that
$C(X)^* = X^\perp$ where $X^\perp$ denotes the annihilator of $X$ in $(\X'\otimes \bR^d)'$. In particular the dual cone $C(X)^*$ is a linear subspace of $(\X'\otimes \bR^d)'$ and so by, Lemma \ref{l:doubledualconeSUB}, $C(X)$ is also a subspace. In particular
if $y$ is the element $x_1+\frac{1}{2}x_2 + \frac{1}{4}x_2 + \dots $ in $C(X)$
then $-y$ lies in $ C(X)$ and so $-y$ is a limit in $\X'\otimes\bR^d$ of a sequence of finite sums $\sigma^{(n)}=\sum_i \lambda_i^nx_i$, for $n=1,2,\dots $, where the coefficients are nonnegative. It follows that the sequence
\[
 x_1+\frac{1}{2}x_2 + \dots + \frac{1}{2^n}x_n + \sigma^{(n)}, n=1, 2, \dots ,
\]
tends to $0$ in $\X'\otimes \bR^d$. Also, this sequence has the form
\[
\mu^{(n)}R(G(p)), n=1, 2, \dots ,
\] 
where for each $k$ we have $\mu^{(n)}_k \geq \frac{1}{2^k}$ for $n\geq k$.  Let $\omega_k^{(n)}=\mu_k^{(n)}$ (resp. $-\mu_k^{(n)}$) if $x_k$ is the row for a strut (resp. cable). Then $\omega^{(n)}$ is the desired proper $\X'$-asymptotic equilibrium stress.
\end{proof}

We remark that the relationship between $\X$-infinitesimal rigidity and appropriate forms of continuous rigidity has not been greatly explored. For example, while the kagome periodic framework in $\bR^2$ has  bounded infinitesimal flexes it seems to be  an open problem whether it has a smooth motion (or even a continuous motion) $p(t)$ for which the maximum deviations of the joints, that is, the quantities $\delta_k(p)= \sup_t |p_k(t)-p(0)|$,  are uniformly bounded. 
See also Section 2 of Owen and Power \cite{owe-pow-crystal}. We note also that our  remarks in Section \ref{ss:examples} suggest that $\X$-infinitesimal rigidity is at best a sufficient condition (for trivial reasons) for a directed form of continuous rigidity with respect to $\X$.

\section{Prestress stability, second order rigidity and continuous rigidity}
We first consider finite bar-joint frameworks and give direct proofs that prestress stability implies continuous rigidity and that second order rigidity is an intermediate property.

\subsection{PS, 2OR and CR for finite frameworks}\label{ss:LRfinite}
A classical result of Asimow and Roth \cite{asi-rot}, \cite{asi-rot-II} in the rigidity theory of {finite} bar-joint frameworks $(G,p)$ is that
if $(G,p)$ is infinitesimally rigid (IR) then it is continuously rigid (CR). The proof uses the implicit function theorem to establish this implication under the assumption that $p$ is a regular point in $\bR^{d|V|}$ for the distance function $f_G$ from $\bR^{d|V|}$ to $\bR^{|E|}$. See Asimow and Roth \cite{asi-rot} or Roth \cite{rot}. On the other hand with the maximal rank characterisation of infinitesimal rigidity in \cite{asi-rot} it is shown, in \cite{asi-rot-II}, that if $(G,p)$ is infinitesimally rigid then $p$ is necessarily a regular point.

It is also true that infinitesimal rigidity implies continuous rigidity for finite tensegrities, a fact due to Connelly - see Roth-Whiteley \cite{rot-whi}, Theorem 5.7. 
 
For a finite \emph{generic} framework, being continuously rigid is equivalent to being first-order rigid  \cite{asi-rot}. The nongeneric bar-joint frameworks of Figure \ref{f:LRbutnotIR} show that this equivalence fails in general.
That the first framework is CR and not IR is clear, since the 3 lower joints are colinear. It is less evident that the second framework is continuously rigid.  
One can give a direct ad hoc proof but better insights come from the consideration of equilibrium stresses and prestress stability.

\begin{center}
\begin{figure}[ht]
\centering
\includegraphics[width=3.5cm]{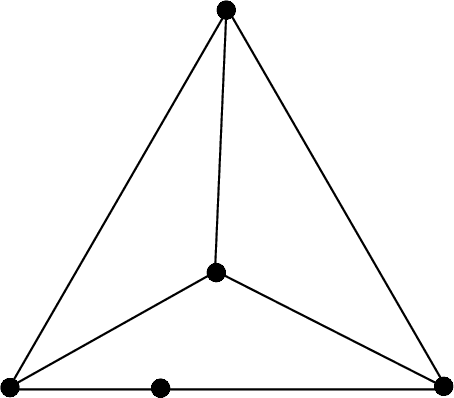}\quad \quad \quad \quad
\includegraphics[width=3.5cm]{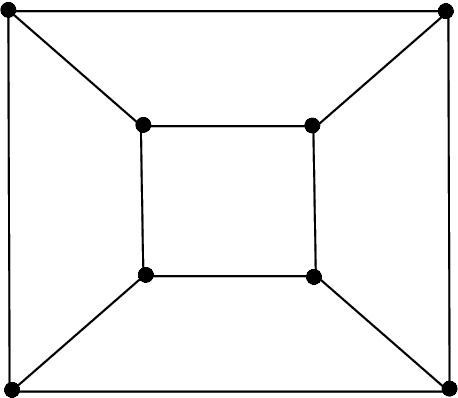}
 \caption{CR but not IR.}
 \label{f:LRbutnotIR}
\end{figure}
\end{center} 

A general \emph{stress} of a bar-joint framework $(G,p)$ is any real field $\omega : E \to \bR$ on the edges and an \emph{equilibrium stress} is a stress with $\omega R(G,p)=0$. 
This is equivalent to an equilibrium condition at each vertex, namely
$\sum_{j}\omega_{ij} (p_i-p_j) =0,$ 
for each vertex  $v_i$.

In the next definition we adopt the convention that $\omega_{ij}=0$ if $i=j$ or if $ij$ is not an edge. Also we have $\omega_{ij}=\omega_{ji}$ and a summation over pairs $i,j$ with $i<j$ indicates a summation over all the edges of $G$. We say that an infinitesimal flex is \emph{trivial} if it is a rigid motion infinitesimal flex.

\begin{definition}\label{d:PS}
A finite bar-joint framework $(G,p)$ is \emph{prestress stable} (PS) if there is an equilibrium stress $\omega$ such that for each nontrivial infinitesimal flex $u$,
\[
\sum_{i<j} \omega_{ij}|u_i-u_j|^2 >0.
\]
\end{definition}

It follows immediately from the definitions that first-order rigidity implies prestress stability. To see that prestress stability implies continuous rigidity we make use of the stress energy functions introduced by Connelly \cite{con-energy}.

For a general stress  $\omega$ for $(G,p)$  the \emph{stress energy function} $E_\omega : \bR^{nd} \to \bR$ is given by
\[
E_\omega(q) = \sum_{i<j} \omega_{ij}|q_i-q_j|^2.
\]

In the presence of prestress stability, for an equilibrium stress $\omega$, the next lemma shows that the second derivative at $t=0$ of $E_\omega(p+tu)$ is positive for a nontrivial infinitesimal flex $u$. The usefulness of the lemma in the proof of Theorem \ref{t:PSimpliesLR} comes from the stability fact that
if $u'$ is close enough to $u$ then the second derivative at $t=0$ of $E_\omega(p+tu')$ is also positive.



\begin{lemma}\label{l:secondderivative}
If $\omega$ is any stress for $(G,p)$ and 
$q, u$ are vectors in $\bR^{dn}$
then 
\[
\frac{d^2}{dt^2}E_\omega (q+tu)|_{t=0}= 2\sum_{i<j} \omega_{ij}|u_i-u_j|^2.
\]
\end{lemma}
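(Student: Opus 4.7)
The plan is to prove this by direct expansion, exploiting the fact that $E_\omega(q+tu)$ is a quadratic polynomial in the scalar parameter $t$, so its second derivative is constant and equal to twice the leading coefficient.

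First I would substitute $q+tu$ into the definition of the stress energy function to get
\[
E_\omega(q+tu)=\sum_{i<j}\omega_{ij}\bigl|(q_i-q_j)+t(u_i-u_j)\bigr|^2.
\]
Using the bilinearity of the Euclidean inner product, I would expand each summand as
\[
|q_i-q_j|^2+2t\,(q_i-q_j)\cdot(u_i-u_j)+t^2|u_i-u_j|^2,
\]
which makes $E_\omega(q+tu)$ a sum of three terms, the third being $t^2\sum_{i<j}\omega_{ij}|u_i-u_j|^2$. The sums over edges are finite (since $G$ is finite), so differentiating term by term under the sum is unproblematic.

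Next I would observe that $E_\omega(q+tu)$ is a polynomial in $t$ of degree at most two, whose coefficient of $t^2$ is exactly $\sum_{i<j}\omega_{ij}|u_i-u_j|^2$. Differentiating twice in $t$ annihilates the constant and linear terms and leaves $2\sum_{i<j}\omega_{ij}|u_i-u_j|^2$, independent of $t$. Evaluating at $t=0$ gives the claimed identity.

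There is essentially no obstacle here: the argument is a one-line polynomial expansion, and the only point to mention is that no equilibrium or stress-field condition is required on $\omega$ and no constraint is placed on $q$ or $u$, so the identity really is valid for any stress $\omega$ and any vectors $q,u\in\bR^{dn}$, as the statement asserts.
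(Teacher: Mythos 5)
Your proof is correct and follows essentially the same route as the paper: expand $E_\omega(q+tu)$ by bilinearity into a quadratic polynomial in $t$ and read off twice the coefficient of $t^2$. The observation that no equilibrium condition on $\omega$ is needed is accurate and matches the lemma's hypotheses.
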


\noindent 
\begin{proof} Expanding the terms  
\[
\omega_{ij}|(q_i+tu_i-(q_j+tu_j)|^2 = \omega_{ij}\langle q_i+tu_i-(q_j+tu_j),q_i+tu_i-(q_j+tu_j)\rangle
\]  
 we have
\[
E_\omega (q+tu) = \sum_{i<j} \omega_{ij}\big(|q_i-q_j|^2
+2t\langle q_i-q_j,u_i-u_j\rangle +t^2|u_i-u_j|^2\big)
\]
and the desired identity follows.
\end{proof}

Let us also note that the derivative of $E_\omega (p+tu)$ at $t=0$ is given by
\[
\frac{1}{2}\frac{d}{dt}E_\omega (p+tu)|_{t=0}= \sum_{i<j} \omega_{ij}\langle p_i-p_j,u_i-u_j\rangle = \sum_{i<j} \omega_{ij}\langle p_i-p_j,u_i\rangle + \sum_{i<j} \omega_{ij}\langle p_j-p_i,u_j\rangle
\]
\[
=\sum_{i<j} \omega_{ij}\langle p_i-p_j,u_i\rangle + \sum_{i<j} \omega_{ij}\langle p_j-p_i,u_j\rangle = \sum_{i<j} \omega_{ij}\langle p_i-p_j,u_i\rangle + \sum_{j<i} \omega_{ji}\langle p_i-p_j,u_i\rangle,
\]
which is zero when $\omega$ is an equilibrium stress.

\begin{thm}\label{t:PSimpliesLR}
Prestress stability implies continuous rigidity for finite bar-joint frameworks.
\end{thm}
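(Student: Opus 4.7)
The plan is to use Connelly's stress energy function $E_\omega$ as a test functional, exploiting two complementary facts: $E_\omega$ is a quadratic form whose linear part at $p$ vanishes (because $\omega$ is an equilibrium stress), while $E_\omega$ is constant along every edge-length-preserving curve through $p$ (because $\omega$ is supported on the edge set). First I would establish the exact identity
\[
E_\omega(p+v) - E_\omega(p) \;=\; \sum_{i<j}\omega_{ij}\,|v_i-v_j|^2, \qquad v \in \bR^{nd},
\]
by expanding $|(p_i+v_i)-(p_j+v_j)|^2$ in the definition of $E_\omega$ and observing that the resulting linear term $2\sum_{i<j}\omega_{ij}\langle p_i-p_j,v_i-v_j\rangle$ vanishes by the equilibrium condition, exactly as in the computation printed immediately after Lemma \ref{l:secondderivative}.

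Next I would argue by contradiction: suppose $p:[0,1] \to \bR^{nd}$ is a nontrivial continuous edge-length-preserving motion with $p(0)=p$. After passing to a transverse slice of the rigid motion orbit through $p$ --- equivalently, after pinning $d+\binom{d}{2}$ coordinates corresponding to a reference simplex --- one may assume $p(t)$ remains in this slice and that $p(t_n) \neq p$ for some sequence $t_n \to 0^+$. Edge-length preservation gives $E_\omega(p(t))=E_\omega(p)$ for every $t$, so feeding $v=p(t_n)-p$ into the identity above yields
\[
\sum_{i<j}\omega_{ij}\,\big|(p_i(t_n)-p_i)-(p_j(t_n)-p_j)\big|^2 \;=\; 0, \qquad n=1,2,\dots .
\]

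Finally I would set $u^{(n)} := (p(t_n)-p)/\|p(t_n)-p\|$ and extract a convergent subsequence $u^{(n)} \to u$ with $\|u\|=1$ by compactness of the unit sphere. Dividing the edge-length relation $|p_i(t_n)-p_j(t_n)|^2 = |p_i-p_j|^2$ by $\|p(t_n)-p\|$ and letting $n \to \infty$ shows that $u$ is an infinitesimal flex of $(G,p)$; dividing the previous displayed equation by $\|p(t_n)-p\|^2$ and letting $n \to \infty$ gives $\sum_{i<j}\omega_{ij}|u_i-u_j|^2 = 0$. The slice/pinning forces $u$ to be transverse to the tangent space of trivial flexes, so $u$ is a \emph{nontrivial} infinitesimal flex, and the last equality directly contradicts prestress stability.

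The principal obstacle I anticipate is ensuring that the limit vector $u$ really is nontrivial. Pinning a reference simplex (or cutting by a slice to the $O(d)\ltimes \bR^d$ orbit of $p$) is the standard finite-dimensional device, but it requires some bookkeeping: one must verify that if the original motion is nontrivial then the reparameterised, pinned motion does leave $p$ along a sequence $t_n \to 0^+$, and that the limiting direction survives transversely to the space of trivial flexes. Once this is in place the rest of the argument is purely algebraic --- the quadratic identity for $E_\omega$ plus a single compactness extraction --- which is exactly the sense in which the resulting proof is ``short'' and well adapted to the later countably infinite setting.
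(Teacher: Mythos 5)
Your argument is correct, and it reaches the conclusion by a genuinely different mechanism from the paper's proof, even though both hinge on the same object: Connelly's stress energy $E_\omega$ and the exact identity $E_\omega(p+v)-E_\omega(p)=\sum_{i<j}\omega_{ij}|v_i-v_j|^2$ coming from the vanishing of the linear term at an equilibrium stress. The paper starts from the statement that a continuously non-rigid finite framework admits a \emph{continuously differentiable} motion $p(t)$ with $p'(0)$ a nontrivial infinitesimal flex (this is the Asimow--Roth input, resting on the local structure of the configuration variety), and then runs a cone argument: $E_\omega$ is strictly increasing along every ray of $p+C(u,\delta)$ and the motion enters that cone for small $s$, contradicting constancy of $E_\omega(p(s))$. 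You instead work with a merely continuous length-preserving motion, pin an affine reference frame, and extract the limit $u$ of the normalized secants $(p(t_n)-p)/\|p(t_n)-p\|$ by compactness of the unit sphere in $\bR^{nd}$; the limit is a nontrivial infinitesimal flex with $\sum_{i<j}\omega_{ij}|u_i-u_j|^2=0$, contradicting prestress stability outright. Your route is more self-contained for the finite case, since it bypasses the variety-theoretic upgrade from continuous to differentiable motions (the bookkeeping you flag --- continuity of the pinning isometries and re-basing at the last time the pinned motion is stationary --- is standard and unproblematic when $p$ affinely spans $\bR^d$). The trade-off runs the other way from what your final sentence suggests, however: the paper's cone argument is the one that transfers to the countably infinite setting of Theorem \ref{t:BPSimpliesSLR}, precisely because there the differentiability at $t=0$ is built into the definition of a boundedly smooth motion, whereas your compactness extraction of a convergent subsequence from the unit sphere fails in $\ell^\infty\otimes\bR^d$; so your proof, while shorter in hypotheses for finite frameworks, does not adapt to the infinite case without substantial modification.
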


\begin{proof}
Let $(G,p)$ be prestress stable with equilibrium stress $\omega$ as in Definition \ref{d:PS}. Assume that $(G,p)$ is not continuously rigid. Then there is a continuously differentiable motion $p(t)$ of $(G,p)$ with $p'(0)$ a nontrivial infinitesimal flex $u$. 
The  real-valued function $t \to E_\omega(p+tu)$ has vanishing derivative at $t=0$, since $\omega$ is an equilibrium stress, and so by Lemma  \ref{l:secondderivative} it 
has the form $A(u)t^2+C$ where $C$ is constant and
$A(u)=2E_\omega(u)=2\sum_{i<j} \omega_{ij}|u_i-u_j|^2 >0$. 
In particular $E_\omega(p+tu)$ is strictly increasing on $\bR_+$.

Consider the finite cone $C(u,\delta)$ of velocity fields in $\bR^{nd}$ given by 
\[
C(u,\delta)=
\{ su': s\in[0,1], |u'-u|\leq \delta\}
\] where $\delta>0$ is such that $A(u') >0$ when $ |u'-u|\leq \delta$. As in the previous paragraph $E_\omega(p+tu')$ is strictly increasing on $\bR_+$ from which it follows that $E_\omega (q)> E_\omega (p)$ for all $q\neq p$ in $p+C(u,\delta)$. 
 
For any continuously differentiable path $s \to q(s)$ in $\bR^{nd}$ with $q(0)=p$ and $q'(0)=u$ the point $q(s)$ is equal to to $p+su$ to first order, and so $q(s)$ lies in $p+C(u,\delta)$ for some sufficiently small $s>0$.
Considering this for the motion $p(s)$ gives the desired contradiction since in this case the bar lengths are preserved and the function  $s\to E_\omega(p(s))$ is constant.
\end{proof}


\subsection{Second order rigidity}

In view of the entries of the rigidity matrix $R(G,u)$ we see that 
\[
\sum_{i<j} \omega_{ij}|u_i-u_j|^2 = 
\sum_{i<j} \omega_{ij}\langle u_i-u_j, u_i-u_j\rangle = \omega R(G,u)u.
\] 
Thus, prestress stability may be reworded as the  following property.
\begin{center}
\emph{There exists  $\omega$  with  $\omega R(G,p) =0$ and  $\omega R(G,u)u >0$ for all nontrivial flexes $u$.} 
\end{center}

This compact reformulation is helpful when making comparisons with another form of rigidity, namely \emph{second order rigidity} (2OR). 
The idea of second order rigidity is that a nontrivial infinitesimal flex $u$ of $(G,p)$ may exist but for each such $u$ there is no differentiable continuous motion $p(t)$ starting in the direction $u$, that is, with $p'(0)=u$. One way to effect this obstruction, in view of the proof of Theorem \ref{t:PSimpliesLR}, is the presence  an equilibrium stress $\omega$ specifically for $u$ in the sense that $ \sum_{i<j} \omega_{ij}|u_i-u_j|^2 >0.$
This suggests the following additional definition which is evidently a weakening of prestress stability. 

\begin{definition}\label{d:secondorderrigid}
A finite bar-joint framework $(G,p)$ is \emph{weakly prestress stable} (WPS) if
 for each nontrivial infinitesimal flex  $u$  there is  an equilibrium stress $\omega$ with $ \sum_{i<j} \omega_{ij}|u_i-u_j|^2 >0.$ 
\end{definition}

In fact Connelly and Whiteley \cite{con-whi} have shown that this property is equivalent to second order rigidity as defined in terms of (the nonexistence of) nontrivial second order flexes. Such a flex for $(G,p)$ is a pair $(u,a)$ where $u, a$ are vectors in $\bR^{nd}$, representing a nontrivial velocity flex $u$ and an acceleration vector $a$. This pair should satisfy the equations arising from the second derivative of the constraint equations for a smooth motion $p(t)$, and they have the form
\[
R(G,p)u=0,  \quad R(G,p)a +R(G,u)u =0.
\]

Since the image space $R(G,p)\bR^{nd}$ is orthogonal to $\ker R(G,p)^T$, the nullspace of the transpose, it follows, from the Fredholm alternative, that either there is a solution $a$ to the equation $R(G,p)a=-R(G,u)u$ 
or there exists $\omega$ in $\ker R(G,p)^T$ with $\omega \cdot (R(G,u)u)\neq 0$. This shows that second order rigidity implies weakly prestress stable. 
In view of our comments preceding Definition \ref{d:secondorderrigid} the reverse implication holds and the properties are equivalent. In particular this gives an alternative to Connelly's original proof \cite{con-1980} (see also \cite{con-gue}) that second order rigidity implies continuous rigidity.

To summarise, for finite bar-joint frameworks we have
\[
IR \implies PS \implies 2OR (=WPS) \implies CR
\]

\begin{example}\label{e:CRbutnotIR_Labelled}
 Let us revisit the square within square framework of Figure \ref{f:LRbutnotIR_infinite}.  The outer joints are $p_1=(-1,1), p_2=(1,1), p_3= (1,-1), p_4=(-1,-1)$, the inner joints are
$p_5=(-1/2,1/2), p_6=(1/2,1/2), p_7= (1/2,-1/2), p_8=(-1/2,-1/2)$, and an equilibrium stress is indicated in Figure \ref{f:LRbutnotIR_Labelled}.
\begin{center}
\begin{figure}[ht]
\centering
\includegraphics[width=5cm]{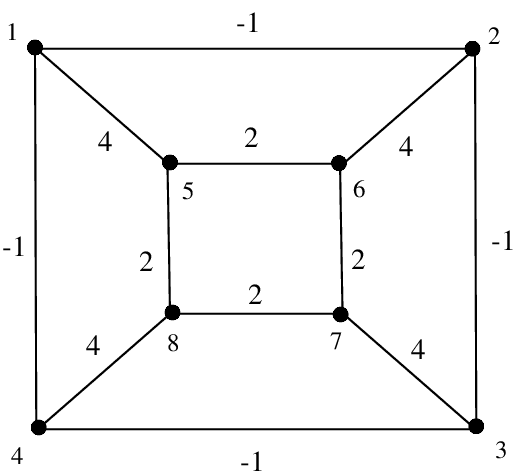}
 \caption{An equilibrium stress.}
 \label{f:LRbutnotIR_Labelled}
\end{figure}
\end{center} 
The nontrivial ``parallel motion" infinitesimal flex $z$ is defined by $z_1=z_2=z_5=z_6=(1,0)$, with $z_k=(0,0)$ for other values of $k$, and the infinitesimal flex $w$ is infinitesimal rotation of the inner square, with $w_5=(1,1), w_6=(1,-1), w_7=(-1,-1), w_8=(-1,1)$ and $w_k=(0,0)$ for other values of $k$. Up to a scalar factor, and a trivial infinitesimal flex, a typical nontrivial infinitesimal flex has the form $\alpha z+\beta w$ with $0\leq \alpha, \beta \leq1$ not both zero. We have 
\[
E_\omega(\alpha z+\beta w)=
\sum_{ij}\omega_{ij}\left[\alpha^2|z_i-z_j|^2+\beta^2|w_i-w_j|^2-2\alpha \beta \langle (z_i-z_j), (w_i-w_j)\rangle\right].
\]
We note that with the exception of $i,j=5,8$ or $6,7$ at least one of the pairs of vectors $z_i-z_j, w_i-w_j$ is zero. Moreover, for these exceptional values the pair of vectors is an orthogonal pair, and so there is no contribution to the sum from the inner product terms. It remains to note that despite the presence of negative stresses $\omega_{ij}$ (of value -1) on the outer edges we have
$\sum_{ij}\omega_{ij} \alpha^2|z_i-z_j|^2 >0$ if $\alpha>0$. It follows that $E_\omega(\alpha z+\beta w)$ is positive and so the framework is prestress stable and hence continuously rigid.
\end{example}

\subsection{Prestress stability and stress matrices} Prestress stability can also be defined in terms the \emph{stress matrix} of an equilibrium stress, as we now demonstrate.

For any velocity vector $u$ and stress field $\omega$ associated with the edges $ij$ of a finite bar-joint framework $G$ (and with $\omega_{ij}=0$ for a nonedge) we have 
\[
\omega R(G,u)u= \sum_{i<j} \omega_{ij}|u_i-u_j|^2 = 
\sum_{i<j} \omega_{ij}\langle u_i-u_j, u_i-u_j\rangle ,
\]
which in turn is equal to
\[
\sum_{i<j} \omega_{ij}(\langle u_i,u_i \rangle -2 \langle u_i,u_j\rangle  +\langle u_j,u_j \rangle ) = 
 \sum_{j:j\neq i} \omega_{ij}|u_i^2|  -\sum_{i<j} 2 \omega_{ij}\langle u_i,u_j\rangle . 
\] 
On the other hand a general square symmetric $n \times n$ matrix $\Omega_{ij}$, with entries $a_{ij}$, has a quadratic form
\[
x^T\Omega x = \sum_i x_i (\Omega x)_i 
= \sum_{i,j} a_{ij}x_ix_j = \sum_i a_{ii}x_i^2 + 2\sum_{i<j} a_{ij}x_ix_j,
\]
and $\Omega\otimes I_d$ similarly has the quadratic form
\[
u^T(\Omega \otimes I_d) u 
= \sum_{i,j} a_{ij}\langle u_i,u_j\rangle = \sum_i a_{ii}|u_i|^2 + 2\sum_{i<j} a_{ij}\langle u_i, u_j\rangle .
\]
So we see that 
\[\omega R(G,u)u=u^T(\Omega \otimes I_d) u, \quad \mbox{where}\quad
\Omega_{ij}=-\omega_{ij}, \quad \mbox{for}\quad  i\neq j, \quad \mbox{and}\quad  \Omega_{ii}= \sum_{j:j\neq i}\omega_{ij}.
\]
In particular the row sums and the column sums of $\Omega$ are zero. The matrix $\Omega$ is defined to be the \emph{stress matrix} for $\omega$ and it follows that we have the following alternative definition of prestress stability.
\medskip

\begin{prop}\label{p:stressmatrixPS} A finite bar-joint framework 
$(G,p)$  in $\bR^d$ is prestress stable if and only if there exists an equilibrium stress matrix $\Omega$ for $p$  with 
$u^T(\Omega \otimes I_d) u >0$ for each nontrivial infinitesimal flex $u$. 
\end{prop}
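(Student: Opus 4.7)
The proposition is essentially a reformulation of Definition \ref{d:PS} through the identity
\[
\omega R(G,u)u \;=\; \sum_{i<j}\omega_{ij}|u_i-u_j|^2 \;=\; u^T(\Omega\otimes I_d)u
\]
that was derived in the discussion preceding the statement. So the plan is to verify that the map $\omega\mapsto\Omega$ restricts to a bijection between equilibrium stresses for $(G,p)$ and the objects that should reasonably be called equilibrium stress matrices for $p$, and then apply the identity.

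First I would observe that the correspondence $\omega \leftrightarrow \Omega$, given by $\Omega_{ij}=-\omega_{ij}$ for $i\neq j$ and $\Omega_{ii}=\sum_{j\neq i}\omega_{ij}$ with inverse $\omega_{ij}:=-\Omega_{ij}$ for $i\neq j$, is a bijection between stresses $\omega$ (symmetric on pairs $ij$) and symmetric $n\times n$ matrices $\Omega$ whose row sums (equivalently column sums) vanish. This row-sum condition is automatic from the construction and records no information about equilibrium.

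Next I would translate the vertex equilibrium conditions $\sum_j\omega_{ij}(p_i-p_j)=0$ into matrix form. A direct computation using $\Omega_{ii}=\sum_{j\neq i}\omega_{ij}$ gives
\[
\sum_j\omega_{ij}(p_i-p_j) \;=\; \Omega_{ii}p_i \;+\; \sum_{j\neq i}\Omega_{ij}p_j \;=\; (\Omega p)_i,
\]
when $p$ is viewed as the $n\times d$ matrix whose $i$-th row is $p_i^T$. Thus $\omega$ is an equilibrium stress if and only if $\Omega p=0$, i.e.\ if and only if each coordinate column of $p$ lies in $\ker \Omega$; this is the natural meaning of "equilibrium stress matrix for $p$."

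With the bijection and the identity in hand, the two formulations of prestress stability match up: an equilibrium stress $\omega$ satisfying $\sum_{i<j}\omega_{ij}|u_i-u_j|^2>0$ on every nontrivial infinitesimal flex $u$ corresponds to an equilibrium stress matrix $\Omega$ satisfying $u^T(\Omega\otimes I_d)u>0$ on those same $u$, and conversely. There is no substantive obstacle; the only point requiring a small verification is that the vertex equilibrium condition on $\omega$ turns into the clean matrix identity $\Omega p=0$, and this is what justifies the terminology used in the statement.
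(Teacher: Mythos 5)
Your proposal is correct and follows essentially the same route as the paper, which treats the proposition as an immediate consequence of the computed identity $\omega R(G,u)u = u^T(\Omega\otimes I_d)u$ together with Definition \ref{d:PS}. Your additional check that the vertex equilibrium conditions translate into $\Omega p=0$ is a sensible explicit verification of a point the paper leaves implicit in the phrase ``equilibrium stress matrix for $p$''.
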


Prestress stability can also be defined for finite tensegrities in the same way to that of Definition \ref{d:PS}, but in terms of the stress energy function associated with a {proper} equilibrium flex. See Connelly \cite{con-1980},  Connelly and Whiteley \cite{con-whi} and Connelly and Guest \cite{con-gue}, for example. Here one can also find examples of tensegrities, such as various Cauchy polygons, which are continuously rigid by virtue of having stress matrices which are positive semidefinite on a complementary subspace to the space of rigid motion infinitesimal flexes.

\subsection{Prestress stability and rigidity for infinite frameworks}\label{ss:LRinfinite}
We first note an elementary extension of Theorem \ref{t:PSimpliesLR} to countable bar-joint frameworks.

\begin{definition}\label{d:sequentiallyPS} 
A countable framework $(G,p)$ is \emph{sequentially prestress stable} if there is an increasing sequence of prestress stable subframeworks $(G_n,p)$, determined by a chain $G_1 \subset G_2 \subset ... $ of finite subgraphs whose union is $G$. 
\end{definition}

\begin{definition}\label{d:sequentiallyctsrigid}
A countable framework $(G,p)$ is \emph{sequentially continuously rigid} if there is an increasing sequence of continuously rigid subframeworks $(G_n,p)$, determined by a chain $G_1 \subset G_2 \subset ... $ of finite subgraphs whose union is $G$. 
\end{definition}

\begin{thm}\label{t:SPSimpliesCR}
If $(G,p)$ is sequentially prestress stable then it is sequentially continuously rigid and hence continuously rigid.
\end{thm}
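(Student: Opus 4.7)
The plan is to prove the two assertions separately: first, that sequential prestress stability implies sequential continuous rigidity, and second, that sequential continuous rigidity implies continuous rigidity. The first is essentially immediate from the finite case; the second requires a small patching argument for isometries along the exhausting chain.

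For the first implication, I would invoke Theorem \ref{t:PSimpliesLR} term by term. By hypothesis there is an increasing chain $G_1 \subset G_2 \subset \cdots$ of finite subgraphs with union $G$ such that each $(G_n,p)$ is prestress stable. Applying Theorem \ref{t:PSimpliesLR} to each finite subframework yields that each $(G_n,p)$ is continuously rigid, which is precisely the definition of sequential continuous rigidity for $(G,p)$.

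For the second implication, let $t \mapsto p(t)$, $t\in [0,1]$, be a continuous motion of $(G,p)$ preserving all edge length constraints, with $p(0)=p$. Its restriction to any $(G_n,p)$ is a continuous motion of that finite subframework, which must be trivial because $(G_n,p)$ is continuously rigid. Hence for each $n$ and each $t$ there is an isometry $T_t^{(n)}$ of $\bR^d$ with $p_i(t)=T_t^{(n)}(p_i)$ for every $v_i\in V(G_n)$. The key step is to upgrade these restricted rigid motions to a single global isometry. Under the standing mild nondegeneracy that $\{p_i : v_i \in V(G)\}$ affinely spans $\bR^d$, some $G_N$ in the chain already contains $d+1$ vertices whose placements are affinely independent. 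For that $N$, the isometry $T_t^{(N)}$ is uniquely determined by its values on $V(G_N)$. For every $n\geq N$, the isometries $T_t^{(n)}$ and $T_t^{(N)}$ coincide on the affinely spanning subset $V(G_N)\subseteq V(G_n)$, and therefore agree as isometries of $\bR^d$. Setting $T_t:=T_t^{(N)}$ gives $p_i(t)=T_t(p_i)$ for every $v_i\in V(G)=\bigcup_n V(G_n)$, so the motion is trivial and $(G,p)$ is continuously rigid.

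The only genuine obstacle is the patching step, which relies on the mild nondegeneracy that eventually some $G_n$ carries an affinely spanning set of joint placements. This rules out pathologies in which every finite subgraph sits in a proper affine subspace of $\bR^d$, and holds automatically under any reasonable interpretation of the framework as genuinely living in $\bR^d$; if this hypothesis is not part of the ambient framing, it should be made explicit in the statement.
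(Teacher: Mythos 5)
Your proof is correct, and it supplies considerably more detail than the paper does: the paper's entire proof is the remark that the theorem ``follows readily from Theorem \ref{t:PSimpliesLR}'', so your first implication (apply Theorem \ref{t:PSimpliesLR} to each $(G_n,p)$ in the chain) is exactly the intended content, and your patching argument for the second implication is the part the paper leaves implicit. One refinement: the nondegeneracy caveat you flag at the end is not actually needed. Since the restriction of $p(t)$ to each continuously rigid $(G_n,p)$ is a rigid motion, it preserves \emph{all} pairwise distances among the joints of $G_n$, not just the edge lengths; and since any two joints of $G$ lie together in some $V(G_n)$, the map $p_i \mapsto p_i(t)$ preserves all pairwise distances on $V(G)$ for each $t$. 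A distance-preserving map of a subset of $\bR^d$ into $\bR^d$ always extends to an isometry of $\bR^d$ (affinely on the affine span, then by any isometric extension to the complement), so $p(t)$ is congruent to $p$ for every $t$ and the motion is trivial, with no spanning hypothesis required. Your uniqueness-based patching via an affinely spanning $V(G_N)$ is the cleaner route when that hypothesis holds, since it also gives the continuity of $t\mapsto T_t$ for free, but the distance-preservation argument covers the degenerate case as well.
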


This theorem follows readily from Theorem \ref{t:PSimpliesLR}.

\begin{example} Figure \ref{f:LRbutnotIR_infinite} indicates an infinite bar-joint framework $(G,p)$. The joint positions are located dyadically; the outer joints are $p_1=(-1,-1), p_2=(-1,1), p_3= (1,1), p_4=(1,-1)$ and the other joints have the form $\frac{1}{2^k}v_j, j=1,2,3,4, k= 1,2,\dots$. 
The framework $(G,p)$ has an infinite dimensional flex space, since the quadruple of joints of the square subframeworks with bar lengths $(1/2)^k$, for $k=1,2,\dots $, is the support of an infinitesimal rotation flex, say $w^{(k)}$, for $k=1,2,\dots $. We assume, as before, that $|w^{(k)}|=\sqrt{2}$ for each $k$.
It follows from the continuous rigidity of the double square framework of Example \ref{f:LRbutnotIR} and a simple induction argument that the framework is sequentially continuously rigid. 
 Also it can be shown that it is sequentially prestress stable.
\begin{center}
\begin{figure}[ht]
\centering
\includegraphics[width=7cm]{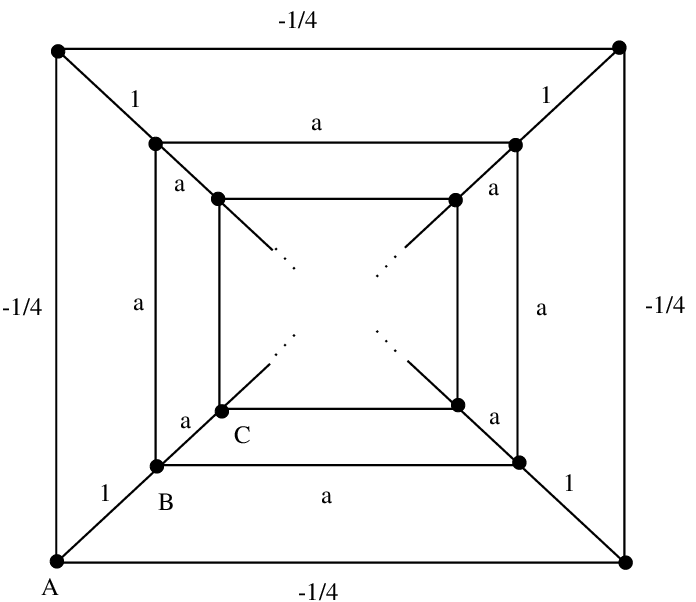}
 \caption{A partially defined equilibrium stress.}
 \label{f:LRbutnotIR_infinite}
\end{figure}
\end{center} 
We also observe that it is possible to define an equilibrium flex $\omega$ for $(G,p)$ which is summable in the sense that the sequence of weights $\omega_e$ gives an absolutely convergent series.
The equilibrium condition is satisfied for the vertex $v_1$ at $A$.
Also the equilibrium condition is satisfied at $B$ if $ a\frac{1}{8} + a\frac{1}{2}=1/4$, 
that is if $a=\frac{4}{5}$. 
It follows that there is a rotationally symmetric summable equilibrium stress $\omega$  with stress values $(\frac{4}{5})^k$ for $k=2,3,\dots$. 
\end{example}

For countable frameworks infinitesimal rigidity does not ensure continuous rigidity, as we observe below in Section \ref{ss:examples} and in Remark \ref{r:CRandDCR}. In view of this and the lack of algebraic geometry methods, it is natural to consider some variants of continuous rigidity in this setting.

Define the space of bounded displacement vectors (or bounded velocity vectors) $u=(u_k)_{k\in \bN}$  as the normed space $\ell^\infty \otimes \bR^d$ with norm given by the supremum, so that $\|u\|=\sup_k |u_k |$, with $|\cdot|$ the usual Euclidean norm on $\bR^d$. We say that a (continuous) motion $p(t), 0\leq t\leq1,$ of $(G,p)$ is \emph{bounded motion} if  $p(t)-p(0)$ belongs to  $\ell^\infty \otimes \bR^d$ for all $t$.

\begin{definition}\label{d:boundedlysmooth}
A \emph{directed} bounded motion of 
$(G,p)$ is a continuous motion $p(t), t\in[0,1],$ such that 
the limit 
\[
\lim_{t\to 0+} (p(t)-p(0))/t = (p_k'(0))_{k\in \bN}
\]
exists with respect to convergence in the displacement space $\ell^\infty \otimes \bR^d$. 
Moreover, the directed bounded motion $p(t)$ is \emph{proper} if  
the infinitesimal flex $p'(0)$ is nonzero, and is \emph{nontrivial} if $p'(0)$ is nontrivial. 
\end{definition}

\begin{definition}\label{d:smoothlocalrigidity}
A countable bar-joint framework in $\bR^d$ is said to be \emph{directedly boundedly continuously rigid}, or, for brevity, \emph{directedly continuously rigid} (DCR) if it has no nontrivial directed bounded motions.
\end{definition}

For finite bar-joint frameworks we may similarly define  \emph{directed motions} and \emph{directed continuous rigidity}. That IR implies DCR is evident from the definitions. On the other hand the equivalence of CR and DCR for finite frameworks depends on algebraic geometry arguments, as indicated in Remark \ref{r:CRandDCR}.

Let us say that a countable framework  $(G,p)$ is \emph{boundedly infinitesimally rigid} (BIR) if there exists no infinitesimal flexes which are nontrivial and bounded. From the definitions we have that that IR implies BIR, and BIR  implies DCR. 

We next turn to a generalisation of the finite framework result that PS  (a weakening of IR) implies DCR. This is done in Theorem \ref{t:BPSimpliesSLR} for frameworks with bounded bar lengths. 

\begin{definition}\label{d:BPS} A  bar-joint framework $(G,p)$ with bounded bar lengths is \emph{boundedly prestress stable} (BPS) if there is a \emph{summable} equilibium stress $\omega$, in the sense that $\sum_{i<j}|\omega_{ij}|$ is finite, and  
for all bounded nontrivial infinitesimal flexes $u$ of $(G,p)$,
\[
\sum_{i<j} \omega_{ij}|u_i-u_j|^2 >0.
\]
\end{definition}

We note that when the bar lengths are bounded  $(G,p)$ has \emph{finite stress energy for $\omega$} in the sense that
\[
E_\omega(p) = \sum_{i<j} \omega_{ij}|p_i-p_j|^2 <\infty.
\]
Once again we refer to the function $q \to E_\omega(q)$ as the \emph{stress energy function} for $\omega$ and $G$.
 

\begin{example}
The framework $(G,p)$ of Example \ref{f:LRbutnotIR_infinite} is boundedly prestress stable for the summable equilibrium stress $\omega$ given there. To see this let $u$ be a nontrivial, bounded infinitesimal flex. 
We may assume that $u$ has zero velocities at $p_3, p_4$. In this case $u$  admits an infinite sum representation 
\[
u= \alpha z + \sum_k\beta_k w^{(k)},
\] 
where $(\beta_k)$ is a bounded sequence and where $z$ is the ``parallel motion" infinitesimal flex whose velocities are zero on the lower joints of the square subframeworks and equal to $(1,0)$ on the upper joints. The positivity and finiteness of $E_\omega(u)$ follow as in Example \ref{e:CRbutnotIR_Labelled} since the resulting series are absolutely convergent.
\end{example}

\begin{example}
Let $(G,p)$ be the lacunary linear framework in $\bR^2$ with bars corresponding to the intervals 
$
\dots , [-7,-3],[-3,-1],[-1,0],
[0,1],[1,3],[3,7], \dots
.$ 
\begin{center}
\begin{figure}[ht]
\centering
\includegraphics[width=8cm]{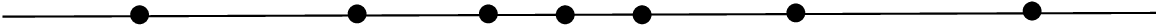}
 \caption{A lacunary linear framework.}
 \label{f:dyadiclinear}
\end{figure}
\end{center} 
Evidently $(G,p)$ is not BSR. Let $\omega$ be the summable equilibrium stress with $\omega_e = \frac{1}{2^k}$ if the bar for edge $e$ has length $2^k$. Every equilibrium stress is in fact a scalar multiple of $\omega$. Note that for any uniformly bounded infinitesimal flex $u$ (or indeed any bounded velocity field) the quantity $E_\omega(u)$ is finite. On the other hand $E_\omega(p)$ is not finite and  $(G,p)$ has unbounded bar lengths.
\end{example}


\begin{lemma}\label{l:summablestressse2ndderiv}
Let $\omega$ be a summable equilibrium stress for a bar-joint framework $(G,q)$ with bounded bar lengths, and 
let $u$ be a bounded velocity vector.
Then 
\[
\frac{d^2}{dt^2}E_\omega (q+tu)|_{t=0}= 2\sum_{i<j} \omega_{ij}|u_i-u_j|^2.
\]
\end{lemma}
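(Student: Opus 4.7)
The plan is to mirror the argument of Lemma \ref{l:secondderivative}, now with the extra task of justifying the term-by-term manipulations in an infinite series. Let $L$ be the uniform upper bound on the bar lengths, so $|q_i - q_j| \le L$ whenever $ij \in E$, and let $M = \|u\|_\infty$ so that $|u_i - u_j| \le 2M$ for all $ij$. I would first expand each summand exactly as in the finite-dimensional proof:
\[
\omega_{ij}|q_i + tu_i - q_j - tu_j|^2 = \omega_{ij}\big(|q_i-q_j|^2 + 2t\langle q_i-q_j,u_i-u_j\rangle + t^2 |u_i-u_j|^2\big).
\]

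Next I would verify that each of the three resulting series converges absolutely and uniformly on any bounded $t$-interval $[-T,T]$. Using the elementary bounds above, the $ij$-th term of each of the three series is dominated, respectively, by $|\omega_{ij}|L^2$, $2T|\omega_{ij}|\cdot L\cdot 2M$, and $T^2|\omega_{ij}|\cdot 4M^2$. Since $\omega$ is summable, $\sum_{i<j}|\omega_{ij}|<\infty$, so each series converges uniformly on $[-T,T]$ by the Weierstrass $M$-test. Summing gives
\[
E_\omega(q+tu) = \alpha + \beta t + \gamma t^2,
\]
where $\alpha = \sum_{i<j}\omega_{ij}|q_i-q_j|^2$, $\beta = \sum_{i<j} 2\omega_{ij}\langle q_i-q_j, u_i-u_j\rangle$, and $\gamma = \sum_{i<j}\omega_{ij}|u_i-u_j|^2$, all finite real numbers.

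The function $t\mapsto E_\omega(q+tu)$ is therefore a polynomial of degree at most $2$ in $t$ with real coefficients, whose second derivative at $t=0$ is $2\gamma = 2\sum_{i<j}\omega_{ij}|u_i-u_j|^2$, as claimed.

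The only genuine issue is the convergence bookkeeping, and I expect this to be the main (though modest) obstacle: the summability hypothesis on $\omega$ must be paired with the boundedness of $u$ and the bounded-bar-length hypothesis to dominate all three series simultaneously. Once these are in hand the identity reduces to the same algebraic expansion used in Lemma \ref{l:secondderivative}, with no new ideas required beyond uniform convergence.
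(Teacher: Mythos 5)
Your proof is correct and follows essentially the same route as the paper's: expand each summand as in the finite case, observe that the three resulting series are absolutely convergent (the paper asserts this; you supply the explicit bounds $|\omega_{ij}|L^2$, $4TLM|\omega_{ij}|$, $4T^2M^2|\omega_{ij}|$), and read off the second derivative of the resulting quadratic in $t$. Your version is simply a more detailed writing-out of the convergence bookkeeping that the paper leaves implicit.
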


\begin{proof}
We have
\[
\omega_{ij}|(q_i+tu_i-(q_j+tu_j)|^2 = \omega_{ij}\langle q_i+tu_i-(q_j+tu_j),q_i+tu_i-(q_j+tu_j)\rangle,
\]  
and so 
\[
E_\omega (q+tu) = \sum_{i<j} \omega_{ij}|q_i-q_j|^2
+ \sum_{i<j}\omega_{ij} 2t\langle q_i-q_j,u_i-u_j\rangle +\sum_{i<j}\omega_{ij}t^2|u_i-u_j|^2.
\]
Each of these series is absolutely convergent and so the desired identity follows.
\end{proof}

From the expansion of $E_\omega (q+tu)$ above it follows that when $\omega$ is summable and $(G,p)$ has bounded bar lengths 
then for a bounded infinitesimal flex $u$ we have
\[
\frac{d}{dt}E_\omega (p+tu)|_{t=0}= 2\sum_{i<j} \omega_{ij}\langle p_i-p_j,u_i-u_j\rangle,
\]
where the series is absolutely convergent.
In particular, when $\omega$ is a summable equilibrium stress for $(G,p)$ then,  as with finite frameworks, this derivative is zero.

\begin{thm}\label{t:BPSimpliesSLR}
Let $G$ be a countably infinite bar-joint framework with bounded bar lengths. If $(G,p)$ is boundedly prestress stable then it is directedly continuously rigid. 
\end{thm}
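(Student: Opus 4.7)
My plan is to mimic the structure of the proof of Theorem \ref{t:PSimpliesLR}, with its two key technical ingredients replaced by their summable-stress / bounded-velocity analogues, namely Lemma \ref{l:summablestressse2ndderiv} together with the first-derivative counterpart remarked on after that lemma. The whole argument will consist of showing that if a nontrivial boundedly smooth motion $p(t)$ existed, then the stress energy along $p(t)$ would be simultaneously constant (because bars are preserved) and strictly increasing to second order (because of prestress stability), a contradiction.

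First I would assume for contradiction that $(G,p)$ is boundedly prestress stable but not boundedly smoothly rigid, so there is a nontrivial boundedly smooth motion $p(t)$, $t\in[0,1]$, with $p(0)=p$ and $p'(0)=u$ a bounded nontrivial infinitesimal flex. The BPS hypothesis provides a summable equilibrium stress $\omega$ with $E_\omega(u)=\sum_{i<j}\omega_{ij}|u_i-u_j|^2 > 0$, and since the bar lengths are uniformly bounded the stress energy $E_\omega$ is an absolutely convergent series whenever evaluated on any point of the affine set $p+\ell^\infty\otimes\bR^d$.

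Next I would make the two main observations. Because each bar length is preserved along the motion and $\omega$ is summable, the absolutely convergent series $E_\omega(p(t))=\sum_{i<j}\omega_{ij}|p_i(t)-p_j(t)|^2$ is constant in $t$. Second, writing $u(t):=(p(t)-p)/t$ for $t>0$, the boundedly smooth condition gives $u(t)\to u$ in $\ell^\infty\otimes \bR^d$, so the $u(t)$ are uniformly bounded for small $t$. Applying the polynomial expansion of $E_\omega(p+sv)$ in $s$ (used in the proof of Lemma \ref{l:summablestressse2ndderiv}) at $s=t$, with $v=u(t)$, and invoking the remark recorded after that lemma that the first-order coefficient vanishes for any bounded velocity field when $\omega$ is a summable equilibrium stress (a rearrangement of absolutely convergent series that collapses to the equilibrium condition at each finite-degree vertex), yields the identity
\[
E_\omega(p(t)) \;=\; E_\omega\bigl(p+t\,u(t)\bigr) \;=\; E_\omega(p) + t^2 E_\omega(u(t)).
\]
Combining the two observations gives $E_\omega(u(t))=0$ for all sufficiently small $t>0$. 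To finish, I would verify that the quadratic form $v\mapsto E_\omega(v)$ is continuous on $\ell^\infty\otimes \bR^d$ with respect to the sup norm, by a routine estimate of the form
\[
|E_\omega(v)-E_\omega(u)| \;\leq\; C\,\|\omega\|_1\,(\|u\|_\infty+\|v-u\|_\infty)\,\|v-u\|_\infty,
\]
so that $E_\omega(u(t))\to E_\omega(u)>0$ as $t\to 0^+$, contradicting $E_\omega(u(t))=0$.

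The main obstacle is bookkeeping the absolute convergence consistently: summability of $\omega$ together with bounded bar lengths and bounded velocities must be invoked at each rearrangement, both to keep $E_\omega(p(t))$ constant pair-by-pair and to justify the vanishing of the first-order term by regrouping around each vertex. Unlike the finite proof, one cannot appeal to a "cone of nearby velocity fields" argument that shows $E_\omega$ is strictly increasing on a whole first-order neighbourhood of $p$, because in infinite dimensions the set $p+C(u,\delta)$ need not trap the motion $p(t)$; instead one must exploit that $p(t)$ is literally of the form $p+t\,u(t)$ with $u(t)\to u$ in sup norm, which is exactly what boundedly smooth provides.
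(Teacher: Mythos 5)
Your proof is correct and rests on essentially the same ingredients as the paper's: the quadratic expansion of $E_\omega(p+sv)$ with vanishing linear term for a summable equilibrium stress, sup-norm continuity of the quadratic form $v\mapsto E_\omega(v)$ via summability of $\omega$, and the strong differentiability condition to write $p(t)=p+t\,u(t)$ with $u(t)\to u$ in $\ell^\infty\otimes\bR^d$. The only difference is presentational --- the paper does retain the cone $p+C(u,\delta)$ from the finite proof, and your closing remark that this cone ``need not trap the motion'' is mistaken: $\|u(t)-u\|_\infty<\delta$ for small $t$ is precisely the statement that $p(t)\in p+C(u,\delta)$, which is how the paper concludes.
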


\begin{proof} Assume that $(G,p)$ is BPS, with associated summable equilibrium stress $\omega$, and that $(G,p)$ is not DCR. Then there is 
a boundedly smooth motion $p(t)$ of $(G,p)$ with $p'(0)$ equal to a nontrivial bounded infinitesimal flex $u=(u_k)$.
As in the proof of Theorem \ref{t:PSimpliesLR} we write $A(u)$ for
$2\sum_{i<j} \omega_{ij}|u_i-u_j|^2$. Note that the function $u' \to A(u')$
is continuous in the sense that for all $\epsilon>0$ there exists $\delta>0$
such that $|A(u')-A(u)|<\delta$ for $\|u'-u\|<\delta$. To see this note that
\begin{equation*}
\begin{split}
A(u)-A(u') &=\sum_{i<j}\omega_{ij}\big( |u_i-u_j|^2-|u_i'-u_j'|^2\big)\\
&\leq \sum_{i<j}\omega_{ij}\big( (|u'_i-u'_j|+2\delta)^2-|u_i'-u_j'|^2\big)\\
&= \sum_{i<j}\omega_{ij}\big(4\delta|u'_i-u'_j|+4\delta^2  \big)
\end{split}
\end{equation*}
Since $u'$ is bounded and $\omega$ is summable this is less than $\epsilon$ for suitably small $\delta$. The same can be said for $A(u')-A(u)$.

As in the proof of Theorem \ref{t:PSimpliesLR} it now follows from this continuity that for some $\delta>0$ the function $q \to E_\omega(q)$ is increasing on each ray of the cone $p+C(u,\delta)$.
However, for the motion $p(t)$ the strong differentiability condition for $t=0$ shows that $\|p(s) - (p+su)\|<\delta s$, for some small $s>0$.
Thus $p(s)$ lies in the cone $p+C(u,\delta)$ and so $E_\omega(p(0)) < E_\omega(p(s))$, which is the desired contradiction.
 \end{proof}



It is also straightforward to formulate an associated form of weak prestress stability.

\begin{definition}\label{d:weaklBPS} A countable bar-joint framework is \emph{weakly boundedly prestress stable} (WBPS) if
 for each nontrivial bounded infinitesimal flex  $u$  there is a summable  equilibrium stress $\omega$ with $ \sum_{i<j} \omega_{ij}|u_i-u_j|^2 >0.$ 
\end{definition}



In parallel with finite frameworks the proof of Theorem \ref{t:BPSimpliesSLR} shows that WBPS implies DCR and so we have the following hierarchy.

\begin{thm}\label{t:IRtoBSRchain} The following implications hold for countable bar-joint frameworks with bounded bar lengths.
\[
IR \implies BIR  \implies BPS \implies WBPS \implies DCR
\]
\end{thm}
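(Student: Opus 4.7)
The plan is to verify the four implications in the chain, treating the first three as near-tautologies and handling the last by essentially repeating the proof of Theorem~\ref{t:BPSimpliesSLR} with the stress chosen after the flex.

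\medskip

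First I would dispose of the three easy implications. Infinitesimal rigidity asserts that every infinitesimal flex is a rigid motion, so in particular every bounded infinitesimal flex is trivial, giving IR $\implies$ BIR. For BIR $\implies$ BPS, observe that under BIR the positivity requirement in Definition~\ref{d:BPS} is vacuous (there are no nontrivial bounded infinitesimal flexes), and the zero stress is a summable equilibrium stress. For BPS $\implies$ WBPS, a single summable equilibrium stress certifying BPS simultaneously witnesses WBPS for every nontrivial bounded infinitesimal flex $u$.

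\medskip

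For the substantive implication WBPS $\implies$ BSR, suppose $(G,p)$ is WBPS and, for a contradiction, admits a nontrivial boundedly smooth motion $p(t)$. Let $u = p'(0)$, which by Definition~\ref{d:boundedlysmooth} is a nontrivial bounded infinitesimal flex. Apply WBPS to this particular $u$ to obtain a summable equilibrium stress $\omega$ with
\[
A(u) := 2\sum_{i<j}\omega_{ij}|u_i - u_j|^2 > 0.
\]
The continuity estimate for $u' \mapsto A(u')$ on bounded velocity fields in the proof of Theorem~\ref{t:BPSimpliesSLR} uses only the summability of $\omega$ and the boundedness of $u'$, so it carries over and yields $\delta > 0$ such that $A(u') > 0$ whenever $\|u' - u\|_\infty < \delta$.

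\medskip

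From here the rest of the argument of Theorem~\ref{t:BPSimpliesSLR} proceeds unchanged. By Lemma~\ref{l:summablestressse2ndderiv}, together with the vanishing of the first derivative of $E_\omega(p + tu')$ at $t=0$ (since $\omega$ is an equilibrium stress), the map $s \mapsto E_\omega(p + su')$ is strictly increasing on $\bR_+$ for each $u'$ in the ball of radius $\delta$ about $u$, and hence $E_\omega$ strictly increases along every ray of the cone $p + C(u,\delta)$. The strong differentiability of $p(t)$ at $t=0$ places $p(s)$ inside $p + C(u,\delta)$ for all sufficiently small $s > 0$, so $E_\omega(p(s)) > E_\omega(p(0))$. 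On the other hand $p(t)$ preserves every bar length and $\omega_{ij} = 0$ off the edge set, so (absolute convergence being guaranteed by summability of $\omega$ and boundedness of bar lengths) $E_\omega(p(t)) = E_\omega(p(0))$ for every $t$. This contradiction completes the proof.

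\medskip

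The main obstacle I anticipate is reassuring oneself that selecting $\omega$ \emph{after} $u$, as WBPS permits rather than the single universal stress supplied by BPS, does not damage the argument of Theorem~\ref{t:BPSimpliesSLR}. This works precisely because that proof fixes the specific $u = p'(0)$ before invoking $\omega$, and uses $\omega$ only through the value $A(u)$ and the resulting local positivity of $A$ in a neighbourhood of $u$ in $\ell^\infty \otimes \bR^d$, both of which require only the stress associated to this particular $u$.
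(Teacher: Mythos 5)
Your proof is correct and follows essentially the same route as the paper, which simply notes that the first three implications are immediate from the definitions and that the proof of Theorem~\ref{t:BPSimpliesSLR} carries over verbatim to give WBPS $\implies$ BSR, since that argument fixes $u=p'(0)$ before the stress $\omega$ is ever invoked. Your explicit check that the continuity estimate for $A$ and the cone argument use only the summability of the particular $\omega$ chosen for that $u$ is precisely the point the paper leaves implicit.
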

\medskip

\subsection{Examples and remarks}\label{ss:examples}
Figure \ref{f:strip_I} represents an infinite strip bar-joint framework $(G,p)$ in $\bR^2$ where the bold semiinfinite line substitutes for a rigid base and where the joints nearest the base each have two incident bars that are colinear. The framework is infinitesimally rigid due to this colinearity. On the other hand the finite framework shown in Figure \ref{f:strip_I} has a nontrivial infinitesimal flex which is nonzero at a single joint. For similar reasons $(G,p)$ is not sequentially infinitesimally rigid.
The infinite framework has a nontrivial continuous bounded motion $p(t)$  parametrised by the $t=\sin A$, for $0\leq t\leq 1/2$, that  fixes the joints of the base (not shown). See Kastis and Power \cite{kas-pow} for further details. Also $p(t)$ is differentiable for $t>0$, when the angle $A$ is acute, and nondifferentable for $t=0$. In contrast the reparametrised continuous motion with $q(t)=p(e^{-\frac{1}{t^2}})$, for $t>0$, is a nonanalytic differentiable motion of the framework with $q'(0)=0$. On the other hand since the framework is BIR it is DCR. 

The frameworks of Figure \ref{f:strip_II} and \ref{f:strip_III} no longer have the colinear property. The first has a bounded differentiable motion $p(t)$ with  $p'(0)$ a nontrivial bounded infinitesimal flex and so fails to be BIR. The second is BIR as it has no nontrivial bounded infinitesimal flex. For further examples of this kind see Owen and Power \cite{owe-pow-crystal}.

\begin{center}
\begin{figure}[ht]
\centering
\includegraphics[width=8cm]{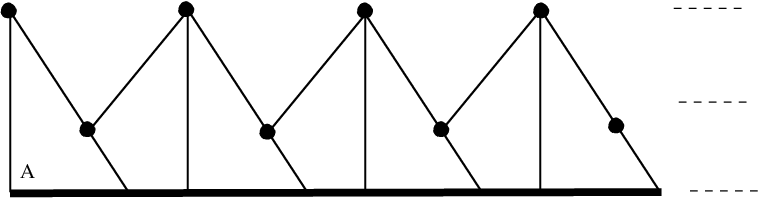}
 \caption{Infinitesimally rigid but not continuously rigid.}
 \label{f:strip_I}
\end{figure}
\end{center} 
\begin{center}
\begin{figure}[ht]
\centering
\includegraphics[width=8cm]{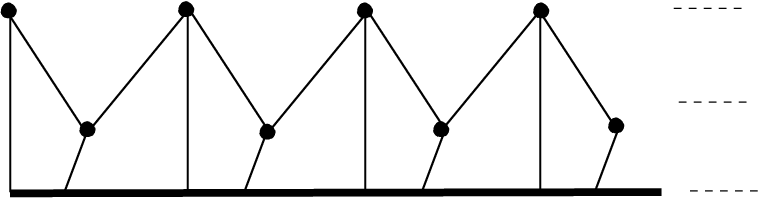}
 \caption{A nontrivial directed bounded motion $p(t)$ exists.}
 \label{f:strip_II}
\end{figure}
\end{center} 
\begin{center}
\begin{figure}[ht]
\centering
\includegraphics[width=8cm]{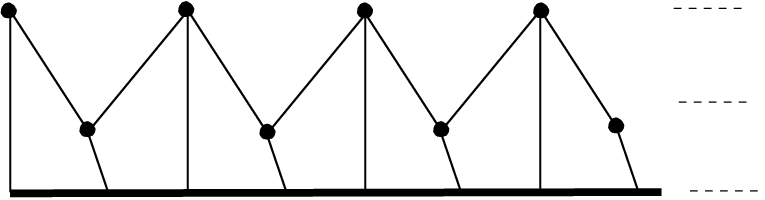}
 \caption{There is no nontrivial infinitesimal flex that is bounded.}
 \label{f:strip_III}
\end{figure}
\end{center} 
\begin{rem}\label{r:CRandDCR}
For finite frameworks continuous rigidity is in fact implied by directed continuous rigidity (DCR). We sketch the argument for this for frameworks in $\bR^2$.
The framework $(G,p)$ has a real variety of placements $q$ in $\bR^{2|V|}$ for which $(G,q)$ is equivalent to $(G,p)$. This is the configuration space of the framework. 
Consider however the real algebraic variety of equivalent placements $q$ with the property that $q_1=p_1$ and $q_2=p_2$ where $p_1p_2$ is  a bar of $(G,p)$. If $(G,p)$ is not continuously rigid then this variety  has a connected component containing $p$ which is not a singleton set. Moreover the coordinate constraint condition implies that any nonconstant continuous curve $q(t)$ in this component with $q(0)=p$ provides a nontrivial motion of $(G,p)$. By the Curve Selection Lemma in real algebraic geometry (see the useful discussion \cite{mor-rua} for example) there exists a real analytic curve $q(t)$ of this type. Such a curve may be reparametrised to provide an analytic curve $q(s)$ with $q'(0)$ nonzero,  and $q'(0)$ is therefore a nontrivial infinitesimal flex of $(G,p)$. Thus $(G,p)$ is not DCR, and so DCR implies CR. Since IR implies DCR we note that this gives another proof of the Asimow Roth theorem.

For countably infinite frameworks the notion of directed (bounded) continuous rigidity is admittedly a rather weak notion of continuous rigidity in its association with the strong requirement of a directed motion. It would be interesting to relax this requirement in some way to see wider consequences of bounded prestress stability or some variant thereof. A potential obstacle here is the fact that there exist exotic countable bar-joint frameworks in $\bR^2$ that have a unique motion for a fixed base, up to reparametrisation, and with a joint tracing an arbitrary continuous path. The underlying reason for this, roughly speaking, is that it is possible to simulate convergent series of traced algebraic curves  within a single infinite framework \cite{owe-pow-kempe}, \cite{pow-kempe}. 
\end{rem}

\medskip
\bibliographystyle{abbrv}
\def\lfhook#1{\setbox0=\hbox{#1}{\ooalign{\hidewidth
  \lower1.5ex\hbox{'}\hidewidth\crcr\unhbox0}}}

\end{document}